\documentclass[11 pt]{amsart}
\usepackage[T1]{fontenc}
\usepackage{amsmath,amsthm,amsfonts,amssymb,latexsym, mathrsfs}
\usepackage{hyperref}
\usepackage{color}
\usepackage{xcolor}
\usepackage{blindtext}
\usepackage{tikz-cd}
\usepackage{stmaryrd}

\theoremstyle{plain}
\newtheorem{theorem}{Theorem}[section]
\numberwithin{equation}{section} 
\numberwithin{figure}{section} 
\theoremstyle{plain}

\theoremstyle{plain}

\theoremstyle{plain}

\theoremstyle{plain}
\newtheorem{corollary}[theorem]{Corollary}
\theoremstyle{plain}
\newtheorem{fact}[theorem]{Fact}
\theoremstyle{plain}
\newtheorem{example}[theorem]{Example}
\theoremstyle{plain}
\newtheorem{lemma}[theorem]{Lemma}
\theoremstyle{plain}

\theoremstyle{plain}

\theoremstyle{plain}

\theoremstyle{plain}
\newtheorem{remark}[theorem]{Remark}
\theoremstyle{plain}

\theoremstyle{plain}

\theoremstyle{definition}
\newtheorem{definition}[theorem]{Definition}
\theoremstyle{definition}

\usepackage{comment}

\providecommand{\dotdiv}{
  \mathbin{
    \vphantom{+}
    \text{
      \mathsurround=0pt 
      \ooalign{
        \noalign{\kern-.35ex}
        \hidewidth$\smash{\cdot}$\hidewidth\cr 
        \noalign{\kern.35ex}
        $-$\cr 
      }%
    }%
  }%
}

\def\Ind#1#2{#1\setbox0=\hbox{$#1x$}\kern\wd0\hbox to 0pt{\hss$#1\mid$\hss}
\lower.9\ht0\hbox to 0pt{\hss$#1\smile$\hss}\kern\wd0}
\def\Notind#1#2{#1\setbox0=\hbox{$#1x$}\kern\wd0\hbox to 0pt{\mathchardef\nn="0236\hss$#1\nn$\kern1.4\wd0\hss}\hbox to 0pt{\hss$#1\mid$\hss}\lower.9\ht0
\hbox to 0pt{\hss$#1\smile$\hss}\kern\wd0}

\def\tp{\mathop{tp}}
\def\dom{\mathop{dom}}

\newcommand{\mons}{\Bar{M}}
\newcommand{\C}{\mathbb{C}}

\newcommand{\F}{\mathfrak{F}}

\newcommand{\Lang}{\mathcal{L}}

\def\Th{\mathop{Th}}

\tikzset{
  symbol/.style={
    draw=none,
    every to/.append style={
      edge node={node [sloped, allow upside down, auto=false]{$#1$}}}
  }
}

\title{Definably Amenable Groups in Continuous Logic}

\begin{document}
\title{Definably Amenable Groups in Continuous Logic}

\author{Juan Felipe Carmona}
\address{Tecnológico de Monterrey, Campus Querétaro, Querétaro, México}
\email{jf.carmona@tec.mx}

\author{Alf Onshuus}
\address{Universidad de los Andes, Cra 1 No 18A-10, Bogotá, Colombia}
\email{aonshuus@uniandes.edu.co}

\begin{abstract}
    We introduce the notions of definable amenability and extreme definable amenability for groups in continuous structures and conduct an extensive analysis of them, drawing parallels with the classical first-order case. We characterize both notions using fixed-point properties. We show that stable and ultracompact groups are definably amenable and prove that, for groups definable in dependent theories, definable amenability is equivalent to the existence of an $f$-generic type. Finally, we show the randomizations of first-order definably amenable groups are extremely definably amenable.
\end{abstract}
\maketitle

\section{Introduction}
\medskip

\medskip

A topological group $G$ is \emph{amenable} if there is a left invariant mean $\mathfrak m$ on $RUCB(G)$, the set of right
uniformly continuous bounded functions on $G$. 

The group is \emph{extremely amenable} if every separately continuous action of $G$ on a compact Hausdorff space $X$ has a fixed point.

We would like to generalize these notions to the context of continuous logic. In this setting, a structure is a metric space and the predicates are uniformly continuous functions. A definable group is a definable subset of $M^n$ with a definable group structure. It is proved in \cite{ivanov2021metric} that a definable group has an equivalent metric bi-invariant with respect to the group operation (Thus the predicates are right-uniformly continuous). We therefore introduce the following definition:

\begin{definition}
    Let $M$ be a continuous structure and $G\subset M^n$ be a definable group.
    We say that $G$ is \emph{definably amenable} if there is a
    mean $\mathfrak m$ on the set of $M$-definable predicates, such that
    $\mathfrak m(\varphi)=\mathfrak m(g\cdot \varphi)$ for any $M$-definable predicate $\varphi$.
\end{definition}

If $M$ is a classical structure, then this definition coincides with the classical one. Namely, a group $G$ is definably amenable if there is a $G$-invariant Keisler probability measure on its definable subsets.

Now, it is proved in \cite{granirer1965extremely} that a group is extremely amenable if and only if it has a \emph{multiplicative} invariant mean on $RUCB(G)$. We generalize this notion to the continuous setting:

\begin{definition}
    
A definable group $G$ is \emph{extremely definably amenable} (EDA) if it has a $G$-invariant global type.

\end{definition}

We will prove (Theorem \ref{typeshom}) that types are multiplicative means, so this definition is consistent with the classical one.

 Although not with this name, these groups were studied in \cite{berenstein2021definable} in the context of randomizations of first order amenable dependent groups.

 In classical model theory, definably amenable groups generalize stable and o-minimal groups to the dependent setting, providing a well-behaved notion of generic types and helping to understand the structure of the quotient  $G/G^{00}$ , where  $G^{00}$  is the smallest type-definable subgroup of bounded index. Outside dependent theories, the class of definable groups is also interesting, as it contains pseudofinite groups.

 We show that definable amenability (as we define it here) play a similar role in the continuous case. We show that stable and pseudocompact groups are definably amenable (Theorem \ref{stable are amenable} and Theorem \ref{ultraproduct}), and we characterize definable amenability via the existence of fixed-point of certain $G$-definable actions (Theorem \ref{fixed point} ). We also show that, for dependent theories, definable amenability is equivalent to the existence of an $f$-generic type (this is, a type that does not fork over a $M_0$ nor its translates) (Theorem \ref{f-generic}). Finally, we show that the randomizations of classical definably amenable groups are extremely definably amenable (Theorem \ref{randomizations are extremely amenable}).

\section{Definable Amenability and Fixed-Point Properties}\label{Preliminaries}
\subsection{Preliminaries}

    From now on, let us fix a theory $T$ in a continuous language $\Lang$ and let $\mons$ be a $\kappa$-saturated, $\kappa$-strongly homogeneous model of $T$, for $\kappa$ large enough.  Recall that a \emph{type over \( A \)} is a functional 
    \[
    p: \mathcal{P}_A^{\bar{x}} \to \mathbb{R}
    \]
    such that there exists a model \( M \) of \( T \) and a tuple \( \bar{a} \in M^{|\bar{x}|} \) satisfying 
    \[
    p(\varphi) = \varphi(\bar{a})
    \]
    for all definable predicates \( \varphi \) over \( A \). 

    A \emph{partial type} over \( A \) is a partial function \( p: \mathcal{P}_A^{\bar{x}} \to \mathbb{R} \) that corresponds to the restriction of a complete type to a subset of definable predicates.

    The \emph{space of complete types} over \( A \) with free variables in \( \bar{x} \), denoted \( S_A^{\bar{x}} \), is the set of all complete types over \( A \). This space is equipped with the \emph{logic topology}, which is the weak\(^*\) topology induced from the dual of \( \mathcal{P}_A^{\bar{x}} \).

Equivalently, a partial type $\pi$ can be seen as the set of closed sets (\emph{closed conditions}) of the form 
    \[
    r_1\leq \varphi(\bar{x}) \leq r_2,
    \]
    such that $r_1\leq\pi(\varphi)\leq r_2$.

    A set of closed conditions determines a partial type if its intersection is non-empty (in the space of types). By compactness (Theorem \ref{compactness} below), this is equivalent to the set of closed conditions having the finite intersection property.

\begin{theorem}[Compactness \cite{yaacov2008model} Proposition 8.6]\label{compactness}
    The space of types $S_A^{\Bar{x}}$ is compact with the Logic Topology.
\end{theorem}
Since the type space is compact and distinct types are separated by predicates, the Stone-Weierstrass theorem guarantees the following:
\begin{theorem}[\cite{yaacov2008model} Theorem 9.9]
    Any continuous function $f : S^{\bar{x}}_A \to \mathbb{R}$ is of the form $f_\varphi(q)=q(\varphi)$ for some definable predicate $\varphi$.
\end{theorem}

We finish the preliminaries by characterizing types as multiplicative functionals:

\begin{theorem}\label{typeshom}
    A functional \(p:\mathcal{P}_A^{\Bar{x}}\to \mathbb{R}\) is a Banach-algebra homomorphism if and only if it is a complete type.
\end{theorem}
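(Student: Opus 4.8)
The plan is to use that, under pointwise multiplication, $\mathcal{P}^{\bar{x}}$ is a commutative unital Banach $\R$-algebra: the connectives are closed under multiplication on bounded ranges and contain the constant $1$, and definable predicates are uniform limits of formulas, so products of definable predicates are again definable predicates. Throughout I would regard a complete type $p$ as the set of predicates vanishing at a realization, and I would take the functionals in question to be unital (sending $1$ to $1$); the zero functional is excluded on both sides, since its kernel is all of $\mathcal{P}^{\bar{x}}$, which as a type is inconsistent. The crux will be the classical Gelfand--Stone observation that a unital homomorphism into $\R$ cannot annihilate an invertible element.

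The easy direction is $(\Leftarrow)$. If $\ker \hat{p}$ is a complete type $p$, then by the description preceding the statement $p$ is realized by a tuple $a$ in some $M \models T$ and its associated functional is $\hat{p}(P) = P(a)$. Evaluation at a point is $\R$-linear, sends $1$ to $1$, and satisfies $\hat{p}(PQ) = (PQ)(a) = P(a)Q(a) = \hat{p}(P)\hat{p}(Q)$ because the product in $\mathcal{P}^{\bar{x}}$ is computed pointwise; hence $\hat{p}$ is a Banach-algebra homomorphism.

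For $(\Rightarrow)$ I would argue as follows. Let $\hat{p}$ be a nonzero homomorphism; then $\hat{p}(1) = \hat{p}(1)^2$ forces $\hat{p}(1) = 1$. Put $p := \ker \hat{p}$ and first show $p$ is finitely satisfiable. Fix $P_1,\dots,P_n \in p$ and $\epsilon > 0$, and suppose for contradiction that no tuple in any model of $T$ satisfies $|P_i| < \epsilon$ simultaneously. Then $h := \sum_{i=1}^n P_i^2$ takes values in $[\epsilon^2, M]$ for some bound $M$, uniformly across all models, so applying the continuous connective $t \mapsto 1/t$ on $[\epsilon^2, M]$ produces a definable predicate $h^{-1}$ with $h\cdot h^{-1} = 1$. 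Applying $\hat{p}$ gives $\hat{p}(h)\hat{p}(h^{-1}) = \hat{p}(1) = 1$, so $\hat{p}(h) \neq 0$; but linearity and multiplicativity give $\hat{p}(h) = \sum_i \hat{p}(P_i)^2 = 0$, a contradiction. Thus every finite subset of $p$ is approximately satisfiable, and by compactness of $S_{\bar{x}}(T)$ the set $p$ is realized by some tuple $a$.

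It then remains to identify $\hat{p}$ with evaluation at $a$ and to check maximality. For any $P$ the predicate $P - \hat{p}(P)\cdot 1$ lies in $\ker \hat{p} = p$, hence vanishes at $a$, so $\hat{p}(P) = P(a)$ and $p = \{P : P(a) = 0\}$. This set is maximal: if $Q(a) \neq 0$, then $Q - Q(a)\cdot 1 \in p$ forces $Q$ to take the value $Q(a) \neq 0$ at every realization of $p$, so $p \cup \{Q\}$ is inconsistent; hence $p$ is complete. I expect the main obstacle to be the invertibility step, which requires knowing that a definable predicate bounded away from $0$ has its reciprocal again among the definable predicates (closure under composition with a continuous connective on the relevant bounded range), together with a correct passage from finite approximate satisfiability to an actual realization via compactness of the type space.
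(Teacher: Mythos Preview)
Your proof is correct and follows the same core strategy as the paper: form $h=\sum_i P_i^2$, note $\hat p(h)=\sum_i \hat p(P_i)^2=0$ by multiplicativity, and deduce that $h$ cannot be bounded away from zero, whence a common zero of the $P_i$ exists. The only substantive difference is in how that last deduction is phrased: you argue via invertibility (if $h\ge\epsilon^2$ uniformly then $h$ has a multiplicative inverse in $\mathcal P^{\bar x}$, contradicting $\hat p(h)\hat p(h^{-1})=\hat p(1)=1$), whereas the paper argues via positivity (if $h\ge\epsilon$ then $\hat p(h)\ge\epsilon$, which tacitly uses $\hat p(\psi)=\hat p(\sqrt\psi)^2\ge 0$ for $\psi\ge 0$). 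Your route is slightly more self-contained, and you also spell out the identification $\hat p=\mathrm{ev}_a$ and the maximality of $p$, which the paper leaves implicit.
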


\begin{proof}
    (\(\Leftarrow\)) This direction is straightforward.

    (\(\Rightarrow\)) First notice that every closed condition of the form $r_1\leq \varphi(\Bar{x})\leq r_2$ is equivalent to a condition of the form
    $\psi(\Bar{x})= 0$, where 
    
    \[\psi(\bar x)=   (\varphi(\Bar{x}) \dotdiv r_2) + (r_1 \dotdiv \varphi(\Bar{x}))\]   

    Therefore, we need to show that the kernel of \(p\) is finitely consistent. First, note that if a predicate \(\varphi \) satisfies \(\varphi(x) > 0\) in every model \(M\), then \(\varphi(x) \geq \epsilon\) for some \(\epsilon > 0\). Consequently, \(p(\varphi) \geq \epsilon\). Therefore, if \(\varphi \geq 0\) and \(p(\varphi) = 0\), there exists \(a\) such that \(\varphi(a) = 0\).

    Now, take \(\varphi_1, \ldots, \varphi_n \in \ker(p)\). Since \(p\) is a homomorphism, we have:
    \[
        p\left(\sum_{i=1}^n \varphi_i^2\right) = \left(\sum_{i=1}^n p(\varphi_i)\right)^2 - 2 \sum_{i \neq j} p(\varphi_i) p(\varphi_j) = 0.
    \]

    Therefore, \(\sum_{i=1}^n \varphi_i^2\) has a realization \(a\). In particular, this implies \(\varphi_i(a) = 0\) for all \(i\).

\end{proof}

\subsection{Definable amenability and extreme definable amenability}

Analogous notions of amenability and extremely amenability for definable groups and definable theories in first-order logic have been studied widely in \cite{hrushovski2008groups} and \cite{hrushovski2019amenability}. In this paper, we propose such notions for the metric case.

Let us recall that a \emph{definable group} $(G,\cdot)$ in a structure $M$ is a group such that the set $G\subset M^n$ is definable and the multiplication is the restriction of a definable function $\cdot:M^n\times M^n\to M^n$ to $G$.

The predicate that defines the group is denoted by $G(x)$ and it induces a group in every model of $T$. This is, for every model $N\models T$, the set $G(N)$ is a group with the same operation as $G(M)$.
Notice that if $\varphi(\Bar{x})$ is a definable predicate, the predicate $g\cdot \varphi(x):=\varphi(g^{-1}\cdot x)$ is definable since the function $\cdot$ is definable (see Fact \ref{definable sets and functions}).

If $G$ is a definable group and $p$ is a complete type in $\Lang (M)$ such that $p(G(x))=0$,  then, for each $h\in G$, we define $q= h\cdot p$ to be the type $tp(h\cdot g/M)$, where $g$ is any realization of $p$.

The type $p$ is said to be \emph{$G$-invariant} if $h\cdot p=p$ for every $h\in G$.

\begin{definition}
    A functional $\mathfrak m$ on $\mathcal{P}_{A}^{\bar x}$ is called a \emph{mean over $A$} if $\mathfrak m(\varphi)\geq 0$ when $\varphi\geq 0$ and $\mathfrak m(\textbf{1})=1$, where $\textbf{1}$ is the constant function of value $1$.
\end{definition}

Notice that any type is a mean, so we can consider the space of types as a subspace of the space of means.

\begin{definition}
    \begin{enumerate}
        \item A mean over $A$  is $A$-invariant if it is invariant under the automorphisms that fix $A$ pointwise.

        \item A mean over $A$ is \emph{A-definable} if it is A-invariant and for every $A$-predicate $\varphi(\bar x, \bar y)$, and $r, s \in \mathbb{R}$, the set \[\{q \in S_A^{\bar y} : r<\mu(\varphi(\bar x; \bar b)) < s \mbox{ for any }b \in \mons; b\models q\}\] is
              an open subset of $S_A^{\bar y}$.
    \end{enumerate}
\end{definition}

\begin{remark}
    The previous definition implies that the function $f_{\varphi}(y)\to \mu(\varphi(\bar x, y))$ is continuous for every predicate $\varphi(\bar x, y)$, hence, there is a predicate $P_{\varphi}(y)$ definable over $M$ such that $P_{\varphi}(y)=\mu(\varphi(\bar x, y))$. This coincides with the definition of definability in the first order case.
\end{remark}

The discussion in the introduction motivates the following definition:

\begin{definition}
    Let $M$ be a continuous structure and $G\subset M^n$ be a definable group.
    We say that $G$ is \emph{definably amenable} if there is a
    mean $\mathfrak m$ on the set of $M$-definable predicates $\mathcal{P}_{M}^{\bar x}$, such that
    $\mathfrak m(\varphi)=\mathfrak m(g\cdot \varphi)$ for any $\varphi\in \mathcal{P}_{M}^{\bar x}$.

    The group $G$ is \emph{extremely definably amenable} (EDA) if there is a $G$-invariant complete $M$-type.
\end{definition}

We next characterize definable amenability in terms of the existence of a measure in the space of types, for this, we need the Riesz representation theorem (See Theorem \ref{Riesz}).

\begin{theorem}\label{measure}
    The group $G$ is definably amenable if and only if there exists a finitely additive probability measure on the space of types $S_G(M)$ that is invariant under the action of $G$.
\end{theorem}

\begin{proof}
    The space of types \( X = S_G(M) \) is compact and Hausdorff, and by the previous remark, \( C(X) \) is the space of predicates, so, by the R–M–K Representation Theorem \ref{Riesz} a mean is given by integration with respect to a probability measure; since it is a Radon measure, the set $C(X)$ is dense in $L_1(X)$, in particular, for every measurable set $A$, there exists a sequence of definable predicates $\varphi_n$ such that $\varphi_n\to \chi_A$ in the $L_1$ norm. Therefore, $\mu(A)=\lim \mathfrak m(\varphi_n) = \lim \mathfrak m(g\cdot \varphi_n) = \mu(gA)$, for every $g\in G$. this implies  that the \( G \)-invariance of the mean is equivalent to the \( G \)-invariance of the measure.
\end{proof}

Quoting known results of continuous logic, we can now proof some facts of our notions of amenability and extreme amenability. This provides evidence that they are the correct ones in this context.

It is noted in \cite{ivanov2021metric} that every group $G$ that admits a continuous structure admits a bi-invariant metric equivalent to the original metric. Thus, predicates restricted to $G$ are in $RUCB(G)$.

\begin{remark}
    If a metric group $G$ is (extremely) amenable, then it is (extremely) definably amenable
\end{remark}

\begin{proof}
    If $G$ is amenable, then there is a mean on $RUCB(G)$. Since the predicates are all bounded right uniformly continuous, we have that $G$ is definably amenable.

    If $G$ is extremely amenable, then its action on the space of types $S_G(M)$ has a fixed point, this is an invariant type, hence $G$ is definably extremely amenable.
\end{proof}

\begin{theorem}\label{ultraproduct}
    Let $\{M_i\}_{i\in I}$ be a family of metric structures let $G_i(M_i)$ be a definable group. If $G_i(M_i)$ is (extremely)  definably amenable for every $i$, then $G(\prod_{\mathcal{U}} M_i)$ is (extremely) definably amenable.
\end{theorem}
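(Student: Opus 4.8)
The plan is to transport the invariant means along the ultrafilter. Write $M_{\mathcal U}=\prod_{\mathcal U} M_i$ and let $\mathfrak m_i$ be a $G(M_i)$-invariant mean witnessing the definable amenability of $G(M_i)$. Recall that $M_{\mathcal U}\equiv M_i$ by the continuous \L o\'s theorem, and that $G(M_{\mathcal U})=\prod_{\mathcal U}G(M_i)$ since $G$ is a definable set. Given a formula $\varphi(x,\bar b)$ with parameters $\bar b=[(\bar b_i)]_{\mathcal U}\in M_{\mathcal U}$, I would set
\[ \mathfrak m(\varphi(x,\bar b)) := \lim_{\mathcal U}\,\mathfrak m_i(\varphi(x,\bar b_i)). \]
The ultralimit exists because each $\mathfrak m_i$ is a positive unital linear functional, hence $|\mathfrak m_i(f)|\leq \sup|f|$, so the sequence is bounded. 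The first thing to verify is independence of the representatives $(\bar b_i)$: if $(\bar b_i)$ and $(\bar b_i')$ represent the same tuple of $M_{\mathcal U}$ then $\lim_{\mathcal U}d(\bar b_i,\bar b_i')=0$, and uniform continuity of $\varphi$ in its parameters forces $\sup_x|\varphi(x,\bar b_i)-\varphi(x,\bar b_i')|\to 0$ along $\mathcal U$; since $\mathfrak m_i$ has norm $1$ the two ultralimits agree.

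Next I would check the mean axioms. Normalization is immediate, $\mathfrak m(\mathbf 1)=\lim_{\mathcal U}1=1$, and linearity is inherited from the linearity of each $\mathfrak m_i$ and of $\lim_{\mathcal U}$. Positivity is the delicate point: suppose $\varphi(x,\bar b)\geq 0$ on $G(M_{\mathcal U})$. Because $G$ is definable, $\psi(\bar y):=\inf_{x\in G}\varphi(x,\bar y)$ is a definable predicate, so by \L o\'s $\psi(\bar b)=\lim_{\mathcal U}\psi(\bar b_i)\geq 0$. Since $\varphi(\cdot,\bar b_i)\geq \psi(\bar b_i)\cdot\mathbf 1$ on $G(M_i)$ and $\mathfrak m_i$ is monotone and unital, $\mathfrak m_i(\varphi(\cdot,\bar b_i))\geq \psi(\bar b_i)$, and taking ultralimits yields $\mathfrak m(\varphi(\cdot,\bar b))\geq 0$. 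Note the subtlety that individual infima may dip below $0$; only their ultralimit is controlled, which is exactly why the estimate is routed through $\psi$.

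For invariance, choose $g=[(g_i)]_{\mathcal U}\in G(M_{\mathcal U})$ with $g_i\in G(M_i)$ for $\mathcal U$-almost all $i$. The predicate $g\cdot P(x)=\inf_y(P(y)+Q(g^{-1},x,y))$ specializes parameter-by-parameter to $g_i\cdot P(\cdot,\bar b_i)$, and $\mathfrak m_i(g_i\cdot P(\cdot,\bar b_i))=\mathfrak m_i(P(\cdot,\bar b_i))$ by $G(M_i)$-invariance, so the ultralimit gives $\mathfrak m(g\cdot P)=\mathfrak m(P)$. To extend $\mathfrak m$ from formulas to all of $\mathcal P^{x}_{M_{\mathcal U}}|_{G}$, I would apply \L o\'s to the definable predicate $\sup_{x\in G}|\varphi|$ to get $|\mathfrak m(\varphi)|\leq \sup_{G(M_{\mathcal U})}|\varphi|$; thus $\mathfrak m$ is $1$-Lipschitz and extends uniquely and continuously to the completion, with positivity, normalization and invariance passing to the limit. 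This proves $G(M_{\mathcal U})$ is definably amenable. For the extreme case I would run the identical construction starting from invariant global types $p_i$ through their evaluation functionals $\widehat{p_i}$, which are multiplicative by Theorem \ref{typeshom}; since ultralimits respect products of bounded sequences, $\mathfrak m(\varphi\psi)=\mathfrak m(\varphi)\mathfrak m(\psi)$, so $\mathfrak m$ is a Banach-algebra homomorphism and, again by Theorem \ref{typeshom}, its kernel is a complete $G$-invariant global type.

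The main obstacle I anticipate is not conceptual but lies in the bookkeeping around parameters in the metric ultraproduct: every appeal to \L o\'s and to invariance must be made for the \emph{definable predicates} $\inf_{x\in G}\varphi$, $\sup_{x\in G}|\varphi|$, and $g\cdot P$ rather than for raw formulas, and the choice of representatives $\bar b_i,g_i$ must be reconciled with the fact that $M_{\mathcal U}$ identifies sequences at metric distance $0$. This is precisely the place where uniform continuity of the nonlogical symbols is essential, and care is needed so that well-definedness, positivity via the infimum, and preservation of multiplicativity in the extreme case all survive the quotient by the metric identification.
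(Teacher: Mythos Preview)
Your proposal is correct and follows exactly the paper's approach: define $\hat{\mathfrak m}(\varphi(x,[a_i]))=\lim_{\mathcal U}\mathfrak m_i(\varphi(x,a_i))$ and observe that this is an invariant mean (respectively, type). The paper dispatches the verification with ``it is easy to check,'' whereas you have carefully spelled out well-definedness, positivity via $\inf_{x\in G}\varphi$, the Lipschitz extension to predicates, and the preservation of multiplicativity in the extreme case --- all of which is accurate and adds value.
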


\begin{proof}
    Let $\mathfrak m_i$ be an invariant mean on $G_i(M_i)$ and $\varphi(x,a_i)$ a predicate with parameters in $a_i$. By definition of the signature $L$, there exists a real number $r$ such that $M_i\models |\varphi(x,y)|\leq r$. In particular, $M_i \models |\varphi(x,a_i)|\leq r$; thus, \[\{\mathfrak m_i(\varphi(x,a_i)): i\in I\}\subset [-r,r].\]

    Since $[-r,r]$ is compact and Hausdorff, the ultralimit $\lim_{i,\mathcal{U}}(\mathfrak m_i(\varphi(x,a_i)))$ exists. We define $\hat{\mathfrak m}$ in the ultraproduct as

    \[\hat{\mathfrak m}(\varphi([x],[a_i]))=\lim_{i,\mathcal{U}}(\mathfrak m_i(\varphi(x,a_i)))\]

    Now, it is routine to check that the ultralimit of positive functionals is a positive functional and that $\mathfrak m (\mathbf 1) = 1$, then $\mathfrak m$
    is a mean. Moreover, since $\mathfrak m_i$ is $G(M_i)$ invariant, we have that     $\mathfrak m$ is $G(\prod_{\mathcal U} M_i)$ invariant. Finally, if $\mathfrak m_i$
    is a type, by  Theorem \ref{typeshom} it is a Banach-homomorphism, then     $\mathfrak m$ is also a Banach-homomorphism, therefore it is a type.
\end{proof}

\begin{corollary}\label{elementary}
    If $M\prec N$ and $G$ is a definable group, then $G(M)$ is (extremely)  definably amenable if and only if $G(N)$ is.
\end{corollary}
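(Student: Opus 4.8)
The plan is to deduce the corollary from Theorem \ref{ultraproduct} by splitting it into an easy ``restriction'' direction and a harder ``ascent'' direction, the latter routed through a common ultrapower.

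First I would isolate the trivial direction. Suppose $M \prec N$ and $G(N)$ is (extremely) definably amenable, witnessed by a $G(N)$-invariant mean (resp.\ invariant global type) $\mathfrak m_N$ on $\mathcal{P}_N^x|_G$. Since $M \subseteq N$, every predicate definable over $M$ is definable over $N$, so $\mathcal{P}_M^x|_G \subseteq \mathcal{P}_N^x|_G$, and I would simply restrict $\mathfrak m_N$ to $\mathcal{P}_M^x|_G$. This restriction is again a mean (positivity and $\mathfrak m(\mathbf 1)=1$ are inherited), and it is $G(M)$-invariant because $G(M)\subseteq G(N)$ and, for $g\in G(M)$ and $P\in \mathcal{P}_M^x|_G$, the translate $g\cdot P$ is again a predicate over $M$. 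For the extreme case I would note that the restriction of a complete $G(N)$-invariant global type to the predicates over $M$ is still a complete $G(M)$-invariant type, so $\ker$ remains a complete type. This settles ``$G(N)$ amenable $\Rightarrow G(M)$ amenable.''

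For the converse I would invoke the ultrapower machinery, since pushing a mean \emph{upward} from $\mathcal{P}_M$ to $\mathcal{P}_N$ while keeping invariance under the larger group $G(N)$ is exactly the step that is not automatic. Because $M\prec N$ gives $M\equiv N$, the (continuous) Keisler--Shelah theorem furnishes an ultrafilter $\mathcal U$ with $M^{\mathcal U}\cong N^{\mathcal U}$ (equivalently, $N$ embeds elementarily into an ultrapower of $M$). Assuming $G(M)$ is (extremely) definably amenable, I would apply Theorem \ref{ultraproduct} to the constant family $M_i=M$ to obtain that $G(M^{\mathcal U})$ is (extremely) definably amenable; transporting along the isomorphism, so is $G(N^{\mathcal U})$. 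Finally, since $N\prec N^{\mathcal U}$ via the diagonal embedding, the restriction argument of the previous paragraph yields that $G(N)$ is (extremely) definably amenable.

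The main obstacle is the ascent direction: a $G(M)$-invariant mean need not extend to a $G(N)$-invariant one, because passing to $N$ enlarges the acting group, and a bare Hahn--Banach extension would control neither invariance under the new translations nor positivity at the same time. This is precisely why the argument is funneled through the common ultrapower, where Theorem \ref{ultraproduct} delivers full $G$-invariance of the lifted mean; the only external input is the Keisler--Shelah isomorphism theorem for continuous structures. The restriction direction, by contrast, is essentially bookkeeping once one checks that $M$-definable predicates and $G(M)$-translations stay inside $\mathcal{P}_M^x|_G$.
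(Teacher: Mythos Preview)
Your proposal is correct and follows essentially the same route as the paper: the restriction direction is handled by restricting the mean (resp.\ invariant type) to predicates over the smaller model, and the ascent direction is routed through Keisler--Shelah and Theorem~\ref{ultraproduct}. The paper's own proof only writes out the ascent direction (and even there stops one line short, leaving the final restriction from $\prod_{\mathcal U} M$ down to $N$ implicit), whereas you spell out both directions; the only cosmetic difference is that the paper phrases Keisler--Shelah as ``$N\prec \prod_{\mathcal U} M$'' directly, while you go via $M^{\mathcal U}\cong N^{\mathcal U}$ and then $N\prec N^{\mathcal U}$, which amounts to the same thing.
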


\begin{proof}
    If $G(M)$ is (extremely) definably amenable, then by Keisler-Shelah theorem for continuous logic, there exists  $\prod_{\mathcal{U}} M$ an ultrapower of $M$ such that $N\prec \prod_{\mathcal{U}} M$. By the previous theorem, $G(\prod_{\mathcal{U}} M)$ is (extremely)  definably amenable.
\end{proof}

Recall that a group $G$ is called \emph{pseudocompact} if it is elementary equivalent to an ultraproduct of compact groups. Thus, we have that:

\begin{corollary}
    Pseudocompact groups are definably amenable.
\end{corollary}

\begin{example}
    Let $M_n(\C)$ be the full matrix C*-algebra over the complex numbers. Since its unitary group is compact, it is amenable (hence definably amenable). However, if $\mathcal{U}$ is a non-principal ultrafilter over $\mathbb{N}$, the ultraproduct $\prod_{\mathcal U} M_n(\C)$ is not nuclear (nor elementary equivalent to a nuclear algebra, see Proposition 7.2.5. \cite{farah2016model}). This implies that the unitary group of $\prod_{\mathcal U} M_n(\C)$ is not amenable with the relative Banach weak topology (Theorem \ref{paterson}) (in particular, it cannot be amenable with the metric topology). However, definable amenability is preserved under ultraproducts (Theorem \ref{ultraproduct}), and the unitary group of the ultraproduct is the ultraproduct of the unitary groups (Theorem \ref{Ge}). Thus, $U(\prod_{\mathcal U} M_n(\C))$ is definably amenable, even though it is not amenable.
\end{example}

\subsection{Fixed-point theorems}

We know investigate the topological properties of the space of means:

The space $V=\mathcal{P}_{M}^{\bar x}$ is a vector space. Thus we can consider the dual space $V^*$. with the weak$^*$ topology on $V^*$. It is well-known that this space is Hausdorff and locally convex (see, for example, \cite{rudin1991functional}).

By Alaouglu's theorem, the unit ball of $V$ is compact.

Let us denote by $\Sigma(M)$ the space of means over $\mathcal{P}_{M}^{\bar x}$ and, as usual, $S_M^{\bar x}$ denotes the space of complete types over $M$. Then,  we may assume that $S_M^{\bar x}\subset \Sigma(M)$.

$\Sigma(M)$ is closed in the unit ball: it is the preimage of $1$ of the (continuous) function $v'\to v'(\mathbf{1})$, whence it is closed as well. Thus, it is compact.

We next show that the closure of the convex hull of $S_M^{\bar x}$ is $\Sigma(M)$. The idea is that compact sets in locally convex spaces are the convex hulls of their extreme points.

Let us recall that a point \( p \) in a convex set \( S \) is called an \emph{extreme point} if \( p \) cannot be expressed as a non-trivial convex combination of other points in \( S \). Formally, \( p \in S \) is an extreme point if, whenever \( p = t x + (1 - t) y \) for some \( x, y \in S \) and \( t \in (0,1) \), it follows that \( x = p \) and \( y = p \).

\begin{theorem}\label{convexhull}
    The closure of the convex hull of the $S_M^{\bar x}$ is $\Sigma(M)$.
\end{theorem}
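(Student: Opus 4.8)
The plan is to realize $\Sigma(M)$ as a compact convex set whose extreme points are precisely the complete types, and then apply the Krein--Milman theorem. The inclusion $\overline{\mathrm{conv}}(S_M(x)) \subseteq \Sigma(M)$ has already been recorded from the convexity and closedness of $\Sigma(M)$, so the remaining content is the reverse inclusion $\Sigma(M) \subseteq \overline{\mathrm{conv}}(S_M(x))$, which will follow once we establish that $\mathrm{ext}(\Sigma(M)) = S_M(x)$.

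First I would apply Phelps' theorem with $A = \mathcal{P}^x$, an algebra of bounded real-valued functions on $X = M^{|x|}$ containing $\mathbf{1}$, and with $B = \mathbb{R}$ regarded as the real-valued functions on a one-point set. Then $K_0'(A,\mathbb{R})$ is exactly the set of positive linear functionals $T$ with $T(\mathbf{1}) \le 1$, and Phelps' theorem identifies its extreme points as the multiplicative functionals. A multiplicative $T$ satisfies $T(\mathbf{1}) = T(\mathbf{1})^2$, so $T(\mathbf{1}) \in \{0,1\}$; the case $T(\mathbf{1}) = 1$ gives a unital Banach-algebra homomorphism, which by Theorem~\ref{typeshom} is exactly $\hat{p}$ for some complete type $p \in S_M(x)$, while the case $T(\mathbf{1}) = 0$ forces $T = 0$.

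Next I would observe that $\Sigma(M) = \{T \in K_0'(A,\mathbb{R}) : T(\mathbf{1}) = 1\}$ is a \emph{face} of $K_0'(A,\mathbb{R})$: if $T \in \Sigma(M)$ and $T = \tfrac{1}{2}(T_1 + T_2)$ with $T_i \in K_0'$, then $1 = \tfrac{1}{2}(T_1(\mathbf{1}) + T_2(\mathbf{1}))$ together with $T_i(\mathbf{1}) \le 1$ forces $T_1(\mathbf{1}) = T_2(\mathbf{1}) = 1$, so both $T_i \in \Sigma(M)$. For a face $F$ of a convex set $K$ one always has $\mathrm{ext}(F) = \mathrm{ext}(K) \cap F$, by a short direct check in both directions. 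Applying this with $F = \Sigma(M)$ and $K = K_0'(A,\mathbb{R})$, and combining it with the description of $\mathrm{ext}(K_0')$ above, yields $\mathrm{ext}(\Sigma(M)) = S_M(x)$, since the only extreme point of $K_0'$ removed by the constraint $T(\mathbf{1}) = 1$ is the zero functional.

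Finally, since $\Sigma(M)$ is compact and convex in the weak$^*$ topology of a locally convex space, the Krein--Milman theorem gives $\Sigma(M) = \overline{\mathrm{conv}}(\mathrm{ext}(\Sigma(M))) = \overline{\mathrm{conv}}(S_M(x))$, which is the claim. I expect the main obstacle to be essentially bookkeeping: checking cleanly that $\mathcal{P}^x$ is genuinely a unital algebra of real-valued functions to which Phelps' theorem applies, and correctly discarding the degenerate multiplicative functional (the zero map) so that it does not appear in $\Sigma(M)$. The face argument is the conceptual crux, since it lets us transfer ``extreme points of $K_0'$ are multiplicative'' into ``extreme points of $\Sigma(M)$ are exactly complete types'' without re-running Phelps directly on $\Sigma(M)$.
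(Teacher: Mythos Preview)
Your proof is correct and follows essentially the same route as the paper: identify the extreme points of $\Sigma(M)$ with $S_M(x)$ via Phelps' theorem together with Theorem~\ref{typeshom}, and then conclude by Krein--Milman (the paper invokes Choquet's theorem, but only the Krein--Milman consequence is used). Your face argument passing from $K_0'(A,\mathbb{R})$ to $\Sigma(M)$ actually fills in a step the paper leaves implicit, since Phelps' theorem as stated concerns operators with $T(\mathbf 1)\le 1$ rather than $T(\mathbf 1)=1$.
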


\begin{proof}
    
    By Theorem \ref{phelps}, where $A=(\mathcal{P}_{M}^{\bar x})$, $B$ the set of constant functions from $\mathbb{R}$ to $\mathbb{R}$ (so $B$ is isomorphic to $\mathbb{R}$ as an algebra), and $\Sigma(M)=K(A,B)$, we have that the extreme points of $\Sigma(M)$ are the multiplicative operators, which, by Remark \ref{typeshom}, are precisely the elements of $S_M^{\bar x}$ . 
    Since $\Sigma(M)$ is a compact convex subset of $V'=(\mathcal{P}_{M}^{\bar x})'$, we can apply the Krein-Milman Theorem (Theorem \ref{kreinmilman}), therefore, $\Sigma(M)$ is the closure of the convex hull of $S_M^{\bar x}$.
\end{proof}

Having established that the closure of the convex hull of types equates to the space of means, we now connect this characterization to the fixed-point property results. The compactness and convexity of this space ensure that definable actions of groups on compact sets admit fixed points.

\begin{definition}

    \begin{itemize}
        \item Let $D$ be a definable set in a structure $M$ and $K$ be a compact Hausdorff space. We say that $f:D\to K$ is \emph{definable} if, for every $C\subset U\subset K$, with $C$ closed and $U$ open, there exists a definable predicate $\varphi(x)$ and $\epsilon>0$ such that \[f^{-1}(C)\subset \{d\in D: \varphi(d)=0\} \subset \{d\in D: \varphi(d)<\epsilon\} \subset f^{-1}(U).\]

        \item A \emph{definable action} of $G$ on $X$ (or \emph{definable $G$-flow}) is an action $G\times K\to K$ such that
              \begin{itemize}
                  \item For every $g\in G$, $x\to gx$ is a homeomorphism.
                  \item For every $k\in K$, $g \to gk$ is definable.

              \end{itemize}
    \end{itemize}

\end{definition}

If all the types based on $G$ are definable, then the action of $G$ onto its space of types is definable. However, requiring all the types to be definable is too strong (for instance, a theory $T$ is stable if and only if all its types over all models are definable). Thus, we propose a weaker definition of definable action:

\begin{definition}

    A \emph{weak definable action} of $G$ on a compact space $K$ (or \emph{weak definable $G$-flow}) is an action $G\times K\to K$ such that
    \begin{itemize}
        \item For every $g\in G$, $x\to gx$ is an homeomorphism.
        \item For some $k_0\in K$, $g\to gk_0$ is definable.
    \end{itemize}
    In this case, the triple $(K,G,k_0)$ is called a \emph{weak definable ambit}.

\end{definition}

\begin{definition}
    Let $S_G(M)$ be the space of types over $M$ concentrating in $G$.
\end{definition}

\begin{theorem}
    $(S_G(M), G, tp(e/M))$, is a weak definable ambit.
\end{theorem}

\begin{proof}

\begin{itemize}
    \item For every $g\in G$, $p\to gp$ is a bijection in $S_G(M)$ that sends the open basic sets $[\varphi(x)<\epsilon]$ to the open set $[g^{-1}\varphi(x)<\epsilon]$. Thus, it is a homeomorphism.
    \item Let $C\subset U\subset S_G(M)$, with $C$ closed and $U$ open. By definition of the logic topology, there exists a definable predicate $\varphi(x)$ and $\epsilon>0$ such that \[C\subset [\varphi(x)=0] \subset [\varphi(x)<\epsilon] \subset U.\] By taking the inverse image by the function $f(g)= g\cdot tp(e/M)$ we get that
          \[f^{-1}(C)\subset \{g\in G : \varphi(g)=0\} \subset \{g\in G: \varphi(g)<\epsilon\} \subset f^{-1}(U).\]
\end{itemize}

\end{proof}

Moreover, by a similar reasoning it can be shown that the action of $G$ on its space of means makes $(\Sigma(G),G,tp(e))$ a weak definable ambit as well. If all the types (respectively means) are definable, then $(S_G(M),G,tp(e))$ ($(\Sigma(G),G,tp(e))$ respectively) is a definable ambit.

The following result characterizes the weak ambit $(S_G (M), G, tp(e))$ as universal in the category of weak definable ambits.

A similar result has been proved in  \cite{gismatullin2014compactifications} for the first-order case (when all the types are definable). 

\begin{theorem}
    The weak definable ambit $(S_G(M),G,tp(e))$ is universal in the category of weak definable ambits. 
\end{theorem}

\begin{proof}
    Let $(X,G,x_0)$ be a weak definable ambit and let $f:G\to X$ be given by $f(g)=g\cdot x_0$. We extend $f$ to a function $f^*:S_G(M)\to X$ as follows:
    \[f^*(p)= \bigcap_{\varphi \in ker(p)} \overline{f(\varphi(G))}\]

    First of all, let us check that the function $f^*$ is well-defined: suppose that $a\neq b$ are $\bigcap_{\varphi \in ker(p)} \overline{f(\varphi(G))}$ and let $C_1$ and $C_2$ be disjoint neighborhoods of $a$ and $b$. Since $f$ is definable, then \[f^{-1}(C_1)\subset \{g\in G : \varphi(g)=0\} \subset \{g\in G: \varphi(g)<\epsilon\} \subset f^{-1}(C_2^c).\] Thus, $b\notin \overline{f(\varphi(G))}$, a contradiction.

    Now let us check that $f^*$ is a homomorphism of $G$-ambits (by construction, it is the only possible one):
    \begin{itemize}

         \item The function $f^*$ is a homomorphism of $G$-spaces:
                      \begin{align*}
                          gf^*(p) & = g\bigcap_{\varphi \in ker(p)} \overline{f(\varphi(G))}  \\
                                  & = \bigcap_{ \varphi \in ker(p)} g\overline{f(\varphi(G))} \\
                                  & = \bigcap_{ \varphi \in ker(p)} g\overline{\varphi(G)x_0} \\
                                  & = \bigcap_{ \varphi \in ker(p)} \overline{g\varphi(G)x_0} 
                          && \text{(translations are homeomorphisms)}\\
                                  & = \bigcap_{ \varphi \in ker(p)} \overline{f (g\varphi(G))} \\
                                  & = \bigcap_{\psi \in ker(gp)} \overline{\psi(G)}     \\
                                  & = f^*(gp) .                              
                      \end{align*}

        \item The image of $tp(e)$ is $x_0$: since $f(e)=x_0$, we have that $x_0\in \overline{f(\varphi(G))}$ for every $\varphi \in ker(tp(e))$. Thus, we have
        \[
            x_0= \bigcap_{\varphi \in ker(tp(e))} \overline{f(\varphi(G))} = f^*(tp(e)).
        \]

    \end{itemize}

\end{proof}

In particular, if all the types are definable, then $(S_G(M),G,tp(e))$ is universal in the category of definable ambits.

Using Theorem \ref{convexhull}, we get the following corollary:

\begin{corollary}
    The weak definable ambit $(\Sigma(G),G,tp(e))$ is universal in the category of weak definable ambits $(K,G,k_0)$, where $K$ is a compact convex set of a locally convex vector space.
\end{corollary}

Finally, we can establish the main theorem of this section, whose proof follows from the discussion above.

\begin{theorem}\label{fixed point}
    Let $G$ be a definable group. Then

    \begin{enumerate}

        \item Characterization of extreme definable amenability:
              \begin{itemize}
                  \item $G$ is extremely definably amenable if and only if every weak definable $G$-action over a compact set has a fixed point.
              \end{itemize}

              Moreover, if all the types in $S_G(M)$ are definable, then:

              \begin{itemize}
                  \item $G$ is extremely definably amenable if and only if every definable action over a compact set has a fixed point.
              \end{itemize}

        \item Characterization of definable amenability:

              \begin{itemize}
                  \item $G$ is definably amenable if and only if every weak definable action over a compact convex subset on a locally convex space has a fixed point.
              \end{itemize}

              Moreover, if all the means in $\Sigma(G)$ are definable, then:

              \begin{itemize}
                  \item $G$ is definably amenable if and only if every definable action over a compact convex subset on a locally convex space has a fixed point.
              \end{itemize}

    \end{enumerate}

\end{theorem}

\section{Stable Groups}\label{Stable}

In this section, we show that stable groups are definably amenable. The proof  relies on several results proved by Ben-Yaacov in \cite{yaacov2010stability}. Let us recall the definitions and results we need:

\begin{definition}
    A predicate $\varphi(x,y)$ is \emph{stable} if for every $(a_i)_{i<\omega}$ and $(b_j)_{j<\omega}$, we have \[\lim_{i\to \infty}\lim_{j\to \infty}\varphi(a_i,b_j)=\lim_{j\to \infty}\lim_{i\to \infty}\varphi(a_i,b_j).\]
    (Assuming that one of the limits exists).
    A theory $T$ is \emph{stable} if all its predicates are stable in every model $M\models T$.

    A group $G$ is \emph{stable} if $G$ is definable in some stable theory $T$.

\end{definition}

If $G$ is a definable group, then $G$ acts continuously on $S_G(M)$: $gp(x)=p(g^{-1}x)$. This will help us define generic types in continuous logic:

\begin{definition}[\cite{yaacov2010stability}]
    Let $S$ be a $G$-space. A set $X\subset S$ is \emph{generic} if finitely many $G$-translates of $X$ cover $S$.
    A type $p$ is \emph{generic} if every open set of $S_G(M)$ containing $p$ is generic.
\end{definition}

\begin{example}
    The group $S_1$ is compact (hence stable). The formulas $|x|<\epsilon$ are clearly generics, but the set $|x|=0$ is not generic.
\end{example}

A group is \emph{definably connected} if $G=G^{00}$. 

\begin{theorem}
For every type $p$ we have that $stab(p)\subseteq G^{00}_M$. 
\end{theorem}

We adapt the proof from \cite{wagner2000simple}, Lemma 4.1.23 to the continuous setting:

\begin{proof}
    
The equivalence relation $\sim$ on $G$, defined by
\[
x \sim y \iff xy^{-1} \in G^{00}_M,
\]
is type-definable over $M$. Since $G^{00}_M$ is the smallest type-definable subgroup of bounded index, we can express $G^{00}_M$ as an intersection of zero-sets of definable predicates:
\[
    G^{00}_M = \bigcap_{i \in I} \{g \in G : \varphi_i(g) = 0\}.
\]
Thus, $x \sim y$ holds if and only if $\varphi_i(xy^{-1}) = 0$ for all $i \in I$, making $\sim$ a type-definable equivalence relation.

Furthermore, since $G^{00}_M$ has bounded index in $G$, the quotient $G/G^{00}_M$ has boundedly many classes, with the number of classes equal to the index of $G^{00}_M$ in $G$. Therefore, any type $p \in S_G(M)$ specifies in which $\sim$-class (i.e., which coset of $G^{00}_M$) its realizations lie. It follows that
\[
\mathrm{stab}(p) \subseteq G^{00}_M.
\]

\end{proof}

Thus, if $G$ is EDA, then it is definably connected.

\begin{theorem}[\cite{yaacov2010stability}]
    Let $G$ be a stable group, then:
    \begin{itemize}
        \item The set of generic types $Gen$ of $G$ is non-empty.
        \item The group $G$ has a connected component $G^{00}$, this is, there exists the smallest type-definable group of bounded index.
        \item The group $G$ acts in $Gen$ and this action is homeomorphic to the action on $G$ on $G/G^{00}$.
    \end{itemize}
\end{theorem}

The proof of the following fact is analogue to the classical case (see, for example, Lemma 8.10 in \cite{simon2015guide}).

\begin{lemma}
    The group $G/G^{00}$ is a compact group.
\end{lemma}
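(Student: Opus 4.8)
The plan is to equip $G/G^{00}$ with the \emph{logic topology} and verify directly the three requirements: compactness, Hausdorffness, and continuity of the group operations. Fix a small model $M$ over which $G^{00}$ is type-definable, and let $\pi\colon G\to G/G^{00}$ be the quotient map. I would declare a set $C\subseteq G/G^{00}$ to be closed exactly when $\pi^{-1}(C)$ is type-definable over $M$, i.e. the common zero-set of a partial type of definable predicates. Since $G^{00}$ has bounded index, $G/G^{00}$ is a set of bounded cardinality, and this is the standard setting of a quotient of a definable set by the bounded, type-definable equivalence relation $E(x,y)\colon x^{-1}y\in G^{00}$.

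Compactness I would obtain from a finite-intersection-property argument via the compactness theorem of continuous logic. If $\{C_j\}_{j}$ is a family of closed subsets of $G/G^{00}$ with the finite intersection property, then each $\pi^{-1}(C_j)$ is type-definable, and every finite subfamily has nonempty intersection (because the corresponding finite intersection of the $C_j$ is nonempty and $\pi$ is onto). Hence the union of the associated partial types is finitely satisfiable, so by saturation of $\mons$ the set $\bigcap_j\pi^{-1}(C_j)$ is nonempty, giving $\bigcap_j C_j\neq\emptyset$. For Hausdorffness I would use that $G^{00}$, and therefore $E$, is type-definable: given two distinct cosets $[a]\neq[b]$ we have $(a,b)\notin E$, and the standard logic-topology separation argument—compactness applied to the type-definable $E$ produces a relatively definable set separating the two classes—carries over verbatim, since it uses only compactness of the type space.

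That $G/G^{00}$ is a topological group I would deduce from the fact that multiplication and inversion on $G/G^{00}$ are induced by the definable group operations on $G$. For closed $C\subseteq G/G^{00}$, the preimage of $C$ under multiplication pulls back along $\pi\times\pi$ to $\{(a,b): ab\in\pi^{-1}(C)\}$, which is type-definable because the graph of multiplication is given by the definable predicate $Q$; the analogous statement holds for inversion. One then checks that the induced maps are continuous for the logic topology on the quotient, concluding that $G/G^{00}$ is a compact Hausdorff topological group.

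The one genuinely delicate point—exactly as in the first-order Lemma 8.10—is the \emph{joint} continuity of multiplication: one must know that the logic topology on $G/G^{00}\times G/G^{00}$ agrees with the product of the logic topologies, which is where compactness of each factor is used. The remaining task is bookkeeping: confirming that the machinery of the logic topology on bounded quotients by type-definable equivalence relations transfers unchanged to continuous logic, which it does because there ``type-definable'' means the zero-set of a partial type of definable predicates and the compactness theorem holds in the same form.
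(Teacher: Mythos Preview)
Your proposal is correct and is precisely the approach the paper intends: the paper does not spell out a proof but simply states that the argument is analogous to the first-order case and cites Lemma~8.10 of \cite{simon2015guide}, which is exactly the logic-topology argument you outline. Your write-up is in fact a faithful expansion of that citation, including the key observation that the product of the logic topologies coincides with the logic topology on the product, so there is nothing to add.
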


With these results, we can prove the main theorem of this section:

\begin{theorem}\label{stable are amenable}
    Stable groups are definably amenable.
\end{theorem}

\begin{proof}
    Since $G/G^{00}$ is a compact group, it has an invariant mean $m^*$ on all its bounded uniformly continuous functions.
    Let $\Psi: G/G^{00}\to Gen$ be the isomorphism (as $G$-sets), let us define a function $m$ on all the predicates $m(\varphi)$ as $m^*(\varphi^* \circ \Psi)$, where $\varphi^*:Gen\to \mathbb{R}$:  $\varphi^*(p)=p(\varphi)$. It is easy to check that $m^*$ is an invariant mean on all the predicates:
    \begin{itemize}
        \item $m(1)=m^*(1^* \circ \Psi)=m^*(1)=1$.
        \item $m(\varphi+\psi)=m^*((\varphi+\psi)^* \circ \Psi)=m^*(\varphi^* \circ \Psi + \psi^* \circ \Psi)=m^*(\varphi^* \circ \Psi)+m^*(\psi^* \circ \Psi)=m(\varphi)+m(\psi)$. (Because $m^*$ is a mean).
        \item $m(g\varphi)=m^*((g\varphi)^* \circ \Psi)=m^*(g^* (\varphi^* \circ \Psi))=m^*(\varphi^* \circ \Psi)=m(\varphi)$. (Because $m^*$ is $G$-invariant).
    \end{itemize}
\end{proof}

\begin{example}
    Let $(\Omega,\mu)$ be a probability space. We can define a metric group $(B,*,d)$, where $B$ is the set of measurable sets of $\Omega$, the operation group is given by $X*Y= X\triangle Y$ and the distance is given by $d(X,Y)=\mu(X\triangle Y)$.
    This is an $\omega$-stable group, hence definably amenable.
\end{example}

We call this group the \emph{probability group} of $(\Omega,\mu)$.

In fact, this group can be described as the \emph{randomization} of the group $\mathbb{Z}/2\mathbb{Z}$ (which is definably amenable since it is finite). By Theorem \ref{randomizations are extremely amenable}, we have that the probability group is extremely definably amenable.

\section{Generic Types, and S1 Ideals in dependent groups} \label{S1 f-generic}

Let $G$ be a definable group, by $G^{00}_A$ we denote the intersection of all
type-definable subgroups over $A$ of bounded index. If $G^{00}_A$ does not depend on $A$ we say that $G^{00}$ exists and it is equal to $G^{00}_A$.

The existence of $G^{00}$ in continuous dependent theories can be proved analogously to the classic case (see \cite{berenstein2021definable}, Theorem 2.19).

If the group is extremely definably amenable, we know that $G^{00}=G$, (so $G/G^{00}$ is trivial). We will show here that, for definably amenable groups $G/G^{00}$ is a compact group and $G^{00}$ is the stabilizer of a generic type (with respect to a forking notion).

Let us recall the definitions of forking and dividing in continuous logic:

\begin{definition}\cite{yaacov2008model}
    A type $\pi(x;B_0)$ \emph{divides} over $C$ if there exists an indiscernible sequence $\{B_i\}_{i<\omega}$ such that
    the $\bigcup \pi(x;B_i)$ is not satisfiable.
    A definable predicate $\varphi(x;b_0)$ \emph{divides} over $C$ if the type $\pi=\{\varphi(x;b_0)=0\}$ divides over $C$.
    We say that $\pi(x,B_0)$ \emph{forks} over $C$ if there exists $D\supset B_0\cup C$ such that every complete type $p$ over $D$, with $p\supset \pi(x,B_0)$, divides over $C$.
\end{definition}

\begin{remark}\label{open divides}
    If a definable predicate $\varphi(x,b_0)$ divides over $C$, then there exists $\epsilon > 0$ and an indiscernible sequence $\{b_i\}_{i<\omega}$ such that $|\varphi(x,b_i)|<\epsilon$ is not satisfiable
\end{remark}

\begin{proof}
    
Since the predicate divides, there exists an indiscernible sequence $\{b_i\}_{i<\omega}$  such that the set \{$|\varphi(x,b_i)|\leq 1/n: i<\omega, n\in \mathbb{N}$\} is not satisfiable. Thus, by compactness, there exists $N$ such that \{$|\varphi(x,b_i)|\leq 1/N: i<\omega,$\} is not satisfiable. Taking $\epsilon=1/N$ we get the desired result.
\end{proof}

Now let use define dependent theories:

\begin{definition}\cite{BenYaacov2009}
    A definable predicate \( \varphi(x,y) \) has the \emph{independence property} (IP) if there are \( r < s \) such that for all \( m \) there exist \( (\bar{b}_n : n < m) \) and \( (\bar{a}_w : w \subseteq m) \) satisfying:
    \[
    \varphi(\bar{a}_w, \bar{b}_n) \leq r \iff n \in w,
    \]
    \[
    \varphi(\bar{a}_w, \bar{b}_n) \geq s \iff n \notin w.
    \]

    A theory has the \emph{independence property} if there is a predicate with the independence property. Otherwise we say that the theory is \emph{dependent}.
\end{definition}

\begin{theorem}
    Let $T$ be a dependent theory. A global type $p\in S(M)$ is $A$-invariant if and only if $p$ does not fork over $A$.
\end{theorem}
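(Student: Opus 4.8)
The plan is to prove the two implications separately, with only the direction ``nonforking $\Rightarrow$ invariant'' requiring dependence.

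For the easy direction, suppose $p$ is $A$-invariant; I will first show $p$ does not divide over $A$ and then upgrade this to nonforking. If a condition $\varphi(x,b)\le r$ belongs to $p$ (i.e. $\hat p(\varphi(x,b))\le r$) and $(b_i)_{i<\omega}$ is an $A$-indiscernible sequence with $b_0=b$, then each $b_i\equiv_A b$, so $A$-invariance forces the condition $\varphi(x,b_i)\le r$ to belong to $p$ for every $i$; since $p$ is a consistent global type, any realization of $p$ satisfies the whole family $\{\varphi(x,b_i)\le r : i<\omega\}$, so it is satisfiable and $\varphi(x,b)\le r$ does not divide over $A$. To pass from nondividing to nonforking I invoke Theorem \ref{typeshom}: a complete type is an algebra character, hence it commutes with the continuous connectives $\min$ and $\max$. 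Thus if some condition of $p$ forked over $A$, by the usual characterization it would imply a finite $\min$ of conditions each dividing over $A$, whence $\hat p$ would assign value $0$ to that $\min$ and therefore to one of the dividing conditions, forcing a dividing condition to belong to $p$ and contradicting the previous step. So $p$ does not fork over $A$, and dependence is not used here.

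For the hard direction I argue the contrapositive: assuming $p$ is \emph{not} $A$-invariant, I produce a condition of $p$ that divides over $A$. Non-invariance yields a formula $\varphi(x,y)$, tuples $b\equiv_A b'$, and reals $r<s$ with $\hat p(\varphi(x,b))\le r$ and $\hat p(\varphi(x,b'))\ge s$. Starting from the pair $(b,b')$ and iterating $A$-automorphisms, I build a long sequence of $A$-conjugates of $b$ and $b'$, and by the continuous Ramsey theorem for indiscernibles I extract an $A$-indiscernible sequence $(b_i)_{i<\omega}$ that alternates between the $b$-type and $b'$-type behaviour of $\varphi$ relative to a fixed realization of $p$. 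Applying dependence of $T$ in the $r<s$ form of IP to this sequence bounds the number of alternations of $\varphi$ across the gap $[r,s]$; this bound is exactly what makes the family $\{\varphi(x,b_i)\le r : i<\omega\}$ of conditions of $p$ inconsistent along the indiscernible sequence, exhibiting $\varphi(x,b)\le r\in p$ as dividing over $A$. Hence $p$ forks over $A$, proving the contrapositive.

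The main obstacle is the construction of the $A$-indiscernible sequence in the hard direction: the raw hypothesis $b\equiv_A b'$ only asserts equality of types over $A$, which in general does not guarantee that the pair extends to an $A$-indiscernible sequence along which the value of $\varphi$ in $p$ genuinely oscillates — the familiar gap between equality of types and equality of Lascar strong types. As in the first-order treatment I expect to resolve this by working over a model (or over $\acl$), so that type-invariance and Lascar-invariance coincide, and by using nonforking to guarantee that the alternating family of instances is realized inside $p$ while dependence makes it inconsistent. Two further routine points to verify in the metric setting are the correct continuous formulation of the alternation bound coming from the failure of IP (oscillation across a real gap $r<s$ rather than a Boolean flip) and that the Ramsey-style extraction of indiscernibles goes through for real-valued formulas.
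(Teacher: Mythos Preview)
Your overall strategy matches the paper's: the invariant $\Rightarrow$ nonforking direction is done without dependence, and the converse is by contrapositive, producing a dividing formula from non-invariance via an $A$-indiscernible sequence and the alternation bound coming from NIP.

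Two comparative remarks. For the easy direction you work harder than necessary. Since $p$ is a \emph{global} type, it has no proper extensions, so by the definition of forking used in the paper, $p$ forks over $A$ if and only if $p$ divides over $A$. The paper simply invokes this; your detour through Theorem~\ref{typeshom} and a $\min$-of-dividing-formulas decomposition is not needed (and the step ``$\hat p$ commutes with $\min$/$\max$ because it is an algebra character'' is not literally what Theorem~\ref{typeshom} gives you, though the conclusion is of course correct since $\hat p$ is evaluation at a realization).

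For the hard direction you are more careful than the paper. You start from $b\equiv_A b'$ and correctly flag the Lascar-strong-type gap in passing to an $A$-indiscernible sequence. The paper bypasses this entirely: it \emph{begins} the contrapositive with an $A$-indiscernible sequence $(a_i)_{i<\omega}$ such that $\hat p(\varphi(x,a_0))=r\neq s=\hat p(\varphi(x,a_1))$, and then observes that by dependence no element can realize all the conditions $\varphi(x,a_{2n})=r$, $\varphi(x,a_{2n+1})=s$, so the product formula $(\varphi(x,a_0)-r)(\varphi(x,a_1)-s)\in p$ divides along the indiscernible sequence of pairs. This shortcut is legitimate precisely under the hypothesis you yourself isolate (working over a model, so that $A$-invariance and Lascar-invariance coincide); your plan to resolve the obstacle that way is the standard one and aligns with what the paper is implicitly assuming. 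The remaining verifications you list (continuous alternation bound across a gap $r<s$, and Ramsey extraction for real-valued formulas) are routine and go through as in the first-order case.
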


The proof is analogous to the classic one \cite{simon2015guide}.

\begin{proof}
    Since $p$ is a global type, not forking over $A$ is equivalent to not dividing over $A$. Hence every $A$-invariant type does not fork over $A$ (this direction does not require dependent).
    Now, assume that $p$ is not $A$-invariant. Hence \[r=  p(\varphi(x,a_0))\neq  p(\varphi(x,a_1))=s\] for some $A$-indiscernible sequence $(a_i)_{i<\omega}$. By $dependence$, there does not exist any element $b$ such that  $\varphi(b,a_{2n})=r$ for and $\varphi(b,a_{2n+1})=s$, whence, the predicate $(\varphi(x,a_{0})-r)(\varphi(x,a_{1})-s)$ in $p$ divides.
\end{proof}

 We will also need the notion of $f$-generic types, defined as in \cite{hrushovski2008groups}:

\begin{definition}
    A global type $p$ is \emph{$f$-generic} if there is a small model $M_0$ such that neither $p$ nor its translates (under the group action) fork over $M_0$.
\end{definition}

The following fact is well known in the first-order setting, but the proof can be adapted almost verbatim to the continuous setting:

\begin{fact}[Corollary 8.20 in \cite{simon2015guide}]
    If $G$ admits an $f$-generic type over some model $M$, then it admits an $f$-generic type over any model $M_0$.
\end{fact}

\subsection{Stabilizers and amenability in dependent theories}

In this section, we will assume that $G$ is definable in a structure $M$ and $Th(M)$ is dependent. 

Given a type $p$ over $M$ we define
\[stab(p):=\{h\in G \text{ such that } hp=p.\}\]

By saturation, if $h\in stab(p)$ then $\exists a,b\in M$ such that $a,b\models p|_{M_0}$ and $ha=b$.

\begin{definition}
    Let $A$ be any set of parameters. The group of \emph{Lascar strong automorphisms} of $\mons$ over $A$ is the group generated by all $\operatorname{Aut}(\mons/M)$ where the $M$ are models containing $A$. Two tuples $a$ and $b$ have the same \emph{Lascar strong type} over $A$ if $\alpha(a) = b$ for some $\alpha \in \operatorname{Aut}_f(\mons/A)$. We denote this by $\operatorname{Lstp}(a/A) = \operatorname{Lstp}(b/A)$.
    \end{definition}

The classical version of the following theorem can be found in \cite{simon2015guide}. Chapter 8.

\begin{theorem}\label{fgenericimplies}
    If $G$ is a definable group with an $f$-generic type $p$, then:
    \begin{enumerate}
        \item $stab(p)=G^{00}$.
        \item $G$ is definably amenable.
    \end{enumerate}
\end{theorem}

\begin{proof}
    (1) Let $p$ be the $f$-generic type over $M_0$.

    \noindent\textbf{Claim 1:} The group $stab(p)$ is a type-definable group. More precisely, \[stab(p)=\{g_1^{-1}g_2| \tp(g_1/M_0)=\tp(g_2/M_0)\}:\]

    \begin{proof}[Proof of Claim 1] Since $gp$ is $Aut_{M_0}(M)$ invariant (by f-genericity), then, for $g_1\equiv_{M_0}g_2$, we have that $g_1p=f(g_1p)=f(g_1)p=g_2p$, for any $f\in Aut_{M_0}(M)$. Whence $g_1^{-1}g_2p=p$.

        On the other hand, if $h\in stab(p)$, then $ha=b$ for some realizations of $p$ outside $M$. This implies that $ha'=b'$ for some realizations of $p|M_0$ inside $M$. By naming $g_1= b^{-1}$ and $g_2=a^{-1}$ we get the desired result. Thus, we conclude that $stab(p)$ coincides with the set of elements preserving type-invariance.

        To see that the set on the right is type-definable, notice that it is equal to 
        \[\bigcap_{\psi_i(y,m)}[\inf_{g_1, g_2 \in G} \big( |x - g_1 g_2^{-1}| + |\psi_i(g_1, m) - \psi_i(g_2, m)| \big)=0]\]
    \end{proof}

    Now, $stab(p)$ is a type-definable group and its index is bounded by \[|S_{M_0}(G)|<2^{|M_0|}<\Bar{\kappa}.\] This implies that \[G^{00}\subset stab(p).\]

    On the other hand, since $G^{00}$ is an $M_0$-invariant group of bounded index (because it is $\emptyset$-invariant), then $g_1\equiv_{G^{00}} g_2$ is an $M_0$-invariant bounded equivalence relation. Therefore, if \[Lstp(g_1/M_0)= tp(g_1/M_0) = tp(g_2/M_0) = Lstp(g_2/M_0)\], then by definition of the Lascar Type, we have that $g_1\equiv_R g_2$ for any bounded $M_0$-invariant equivalence relation $R$. In particular, $g_1^{-1}g_2\in G^{00}$. Thus, \[stab(p)=\{g_1^{-1}g_2| tp(g_1/M_0)=tp(g_2/M_0)\}\subset G^{00}.\] Whence \[stab(p)= G^{00}.\]

    (2) In order to prove that $G$ is definably amenable, let us assume first that $T$ is a countable theory over a countable language and let $\varphi:M\to \mathbb{R}$ be any predicate defined over a model $M_0$ of countable density character. Assume that $p$ is $f$-generic over $M_0$ as well.

    Let us define $f_{\varphi}:G/G^{00}\to \mathbb{R}$ as follows:
    \[ f_{\varphi}([g]) = p (g\varphi).\]

    Notice that $f_{\varphi}$ is well-defined since $p$ is $G^{00}$ invariant.

    \noindent\textbf{Claim 2:} $f_{\varphi}$ is measurable:

    \begin{proof}[Proof of Claim 2]

        We want to check that $\{[g]\in G/G^{00}: r<p(g\varphi(x))<s\}$ is Borel in $G/G^{00}$. It suffices to show that $\{g\in G: p(g\varphi(x))\in (r,s)\}$ is a countable union of closed sets of $G$ (because the projection of $S_G(M)$ onto $G/G^{00}$ is closed).

        If $(a_i)_{i<\omega}\models p^{\omega}_{M_0}$, then $\lim_{i\to \infty} \psi(a_i)=p(\psi(x))$. Thus, whe have that $p(\psi(x))\in (r,s)$ if and only if

        (*) There exists a rational number $q\in (r,s)$ such that, for \[\epsilon_q= \frac{1}{2}\min\{|q-r|,|q-s|\}\] there exists $N<\omega$ such that $|\psi(a_N)-q|<\epsilon_q$ and  $|\psi(a_i)-\psi(a_j)|<\epsilon_q$ for $i,j>N$.

        Fix an element $g\in G$. By dependence, for every $\epsilon$ there is a maximal $N$ such that $(a_i)_{i<N}\models p^{N}_{M_0}$ and $|g\varphi(a_i)-g\varphi(a_{i+1})|\geq \epsilon$ for $i<N$.


        Let
        \[ \Phi^q_{N}(g) = \Big\{\inf\limits_{x_1,...,x_N}|\varphi_i(x_1,...,x_n)|\cdot \left[\prod_{i<N} \left(\epsilon_q \dotdiv |g\varphi(x_i)-g\varphi(x_{i+1})|\right)\right] \cdot\]

        \[|g\varphi(x_N)-q| \dotdiv \epsilon_q : \varphi_i \in p \Big\}.\]

        (This type says that there are $N$ realizations of the type $p^{N}_{M_0}$ satisfying (*)).

        Hence,

        \[f_{\varphi}(g)= p(g\varphi)\in (r,s) \Longleftrightarrow g\in \bigcup_{q\in \mathbb{Q}\cap(r,s)} (\bigcup_{N<\omega} ( \Phi^q_{N}(g)\cap \neg (\Phi^q_{N+1}(g))).\]

        Finally, since $L$ is countable and $M_0$ has a countable dense set, we may assume that $(\Phi^q_{N+1}(g))$ is countable. Thus,  $\{g\in G: p(g\varphi(x))\in (r,s)\}$ is a countable union of closed sets and $\{[g]\in G/G^{00}: r<p(g\varphi(x))<s\}$ is Borel in $G/G^{00}$. Therefore $f_{\varphi}$ is measurable.

    \end{proof}

    \noindent\textbf{Claim 3:} $f_{g_0\varphi}= g_0\cdot f_{\varphi}$.

    \begin{proof}[Proof of Claim 3]
        $f_{g_0\varphi}([g])= p(g g_0 \varphi) = f_{\varphi}([g] g_0) = g_0\cdot f_{\varphi}([g])$.
    \end{proof}

    \noindent\textbf{Claim 4:} $f_{\varphi+\psi}= f_{\varphi}+f_{\psi}$.

    \begin{proof}[Proof of Claim 4]
        $f_{\varphi+\psi}[g]= p(g(\varphi+\psi))= p(g\varphi+g\psi)$

        $= p(g(\varphi)+p(g\psi))=(f_{\varphi}+f_{\psi})[g]$.
    \end{proof}

    Let us define $\mathfrak m$ as \[\mathfrak m(\varphi)= \int f_{\varphi}.\]

    \noindent\textbf{Claim 5:} $\mathfrak m$ is a $G$-invariant functional $\mathfrak m:Form\to\mathbb{R}$ and $\mathfrak m(1)=1$.

    \begin{proof}[Proof of Claim 5]
        \begin{itemize}
            \item $\mathfrak m$ is $G$-invariant: $\mathfrak m(g\varphi)=\int f_{g\varphi} = \int g\cdot f_{\varphi} = \int f_{\varphi}$.

            \item $\mathfrak m$ is a functional: $\mathfrak m(\varphi + \psi) = \int (f_{\varphi+\psi})= \int f_{\varphi}+f_{\psi} =\int f_{\varphi}+\int f_{\psi}$.

            \item $\mathfrak m(1)=1$: $f_{1}[g]= p(g\cdot 1)= p(1) = 1$, hence $\mathfrak m(1)=\int f_{1} = \int 1 =1$.
        \end{itemize}

    \end{proof}

    So if $G$ has an $f$-generic type, then it is definably amenable.\end{proof}

    Now, let us prove that the converse of the previous theorem is also true:
    
    \begin{theorem}\label{f-generic}
        
        $G$ is definably amenable, if and only if admits an $f$-generic type.

    \end{theorem}

    \begin{proof}
    Assume that $G$ is definably amenable with an $M_0$-invariant measure, but does not have any $f$-generic type over $M_0$, then every type contains a condition that divides over $M_0$. By Lemma \ref{open divides}, every type contains an open condition that divides over $M_0$.Then, by compactness, there is a finite subcover of open sets of the form $\varphi_i(x)<\epsilon_i$, $i\leq n$. But, since these conditions fork, the open sets have measure $0$. This contradicts the fact that the measure of the whole space is $1$.
    \end{proof}

    \subsection{S1-ideals} 

    We assume throughout this subsection that $G$ is a definably amenable group with mean $\mathfrak m$ and all the predicates live in $G$.  
    We will first show the existence of an $S1$ ideal of \emph{small positive predicates}. We then show that, if a type contains only predicates outside the ideal, then it is $f$-generic. 

\begin{definition} [Classical First-Order]

    Let $I$ be an ideal of the boolean Algebra of $\mons$-definable sets. We say that $I$ is $S1$  if for any formula $\varphi(x, y)$ and indiscernible $(a_i)_{i\in \mathbb{N}}$, if $\varphi(x, a_i) \cap \varphi(x, a_j) \in I$ for $i\neq j$, then some $\varphi(x, a_i) \in I$.

\end{definition}

In the classical first-order setting, it can be shown that if $G$ is definably amenable, then the set of small formulas is $S1$. In this context, by a small formula we mean a formula of measure 0.

We aim to establish an analogue of this fact in the continuous setting, drawing inspiration from measure theory. In this context, we need to refine the notion of ``small''. Intuitively, just as a set is considered ``small'' if it has measure zero, we propose that a predicate \( \varphi(x) \) should be considered ``small'' if the set of zeros of \( \varphi(x) \) is ``small'' in a similar sense.

To formalize this, assume \( \varphi(x) \geq 0 \). Since \( \varphi(x) > 0 \) for almost all \( x \) (in the sense of measure theory), we can find, for every $\epsilon$ a sufficiently large \(\lambda\) such that the measure of \( \lambda \varphi(x) < 1 \) is less than $\epsilon$ . This implies that the function 
\[
\mathfrak{m}(1 \dotdiv \lambda \varphi(x)) \to 0 \quad \text{as} \quad \lambda \to \infty,
\].

Since we do not have an explicit measure defined on subsets of the model, we adopt this behavior as the operational definition of ``small'' for \( \varphi(x) \). This approach mirrors the way measure theory defines smallness via limiting behavior.

From now on, we will be working almost exclusively with positive predicates with parameters.

\begin{definition}
    Let $\varphi$ and $\psi$ be positive predicates. We define the pointwise operations:
    \[
        (\varphi \wedge \psi) (x) := \max\{\varphi(x), \psi(x)\}, \quad (\varphi \vee \psi) (x) := \min\{\varphi(x), \psi(x)\}.
    \]
    The truncation of $\varphi$ is given by $\overline{\varphi} := 1 \vee \varphi$. Additionally, let
    \[
    \mathfrak{m}_{\infty}(\varphi) = \lim\limits_{\lambda \to \infty} \mathfrak{m}(\overline{\lambda \varphi}).
    \]
    We say that $\varphi$ is \emph{small} if $\mathfrak{m}_{\infty}(\varphi) = 1$.
\end{definition}

\begin{remark}
    The notation $\wedge$ and $\vee$ in this paper follows a convention that naturally arises from considering the opposite partial order on the set of positive predicates. 

    This choice ensures that these operations correspond to the \emph{lattice join} and \emph{lattice meet}, respectively, when considering the opposite order where $\varphi \preceq \psi$ means that $\varphi(x) \geq \psi(x)$ for all $x$. In this sense, our definitions align with standard lattice-theoretic conventions.

    Simultaneously, these operations coincide with the classical logical operations in first-order logic.
    
\end{remark}

\begin{remark}\label{bounded properties}
    If $\varphi$ and $\psi$ are positive predicates, then 
    \[
    \overline{\varphi} \wedge \overline{\psi} = \overline{\varphi \wedge \psi}, 
    \quad \overline{\varphi} \vee \overline{\psi} = \overline{\varphi \vee \psi}.
    \]
\end{remark}

\begin{lemma}
    If $\varphi$ and $\psi$ are small, then $\varphi \vee  \psi$ is small.
\end{lemma}

\begin{proof}
    Note that for any \(\lambda > 0\),
    \[
    \overline{\lambda (\varphi \vee  \psi)} = \overline{\lambda \varphi} \vee  \overline{\lambda \psi}.
    \]
    Since \(\mathfrak{m}\) is monotone and
    \[
    \mathfrak{m}(\overline{\lambda \varphi} \vee  \overline{\lambda \psi}) \geq \mathfrak{m}(\overline{\lambda \varphi}) \vee  \mathfrak{m}(\overline{\lambda \psi}),
    \]
    taking limits yields
    \[
    \mathfrak{m}_\infty(\varphi \vee  \psi) = 1.
    \]
\end{proof}

\begin{lemma}\label{two-predicate}
    For every $\varphi$ and $\psi$ positive predicates, we have 
    \[
    \mathfrak{m}_\infty(A\wedge B) + \mathfrak{m}_\infty(A\vee B) = \mathfrak{m}_\infty(A) + \mathfrak{m}_\infty(B).
    \]
\end{lemma}

\begin{proof}
    For any \(a,b\in [0,1]\) we have the identity
    \[
    a\wedge b + a\vee b = a+b.
    \]

    Applying this pointwise to $\varphi$ and $\psi$ yields 
    \[
    \overline{\varphi}\wedge \overline{\psi} + \overline{\varphi}\vee \overline{\psi} = \overline{\varphi}+\overline{\psi}.
    \]
    Thus, for every $\lambda > 0$,
    \[
    \mathfrak{m}(\overline{\lambda \varphi}\wedge \overline{\lambda \psi}) + \mathfrak{m}(\overline{\lambda \varphi}\vee \overline{\lambda \psi}) = \mathfrak{m}(\overline{\lambda \varphi}) + \mathfrak{m}(\overline{\lambda \psi}).
    \]
    Taking limits yields the desired result.

\end{proof}

\begin{definition}
    Let $I$ be a set of positive predicates. We say that $I$ is an \emph{ideal} if:

    \begin{enumerate}
        \item Whenever $\varphi\in I$ and $\psi\geq \varphi$, then $\psi\in I$.
        \item If $\varphi$ and $\psi$ are in $I$, then $\varphi \vee  \psi$ is in $I$.
        \item $1 \in I$.
    \end{enumerate}

    The ideal is \emph{invariant over $A$} (\emph{$A$-invariant}) if for every $\varphi(\Bar{x},\Bar{a})\in I$ and $\Bar{a}\equiv_A\Bar{a}'$, we have that $\varphi(\Bar{x},\Bar{a}')\in I$.

    We say that an $A$-invariant ideal $I$ is \emph{$S1$ over $A$} if, for every $A$-indiscernible sequence of definable predicates $(\varphi_i)_{i<\omega}$, if $\varphi_i \wedge  \varphi_j \in I$ for every ($i\neq j$), then $\varphi_i\in I$ for every $i$.
\end{definition}

\begin{theorem}
    If $G$ is definably amenable and the measure is $M$-invariant, then the set $I$ of small positive predicates is an $S1$-ideal over $M$.
\end{theorem}

\begin{proof}
    Let $(\varphi_i)_{i<\omega}$ be an $M$-indiscernible sequence of definable predicates such that $\varphi_i\wedge \varphi_j \in I$ for every $i\neq j$. We want to show that $\varphi_i\in I$ for every $i$.
    Assume for contradiction that none of the \( \varphi_i \) is small. By indiscernibility, there exists \(\delta>0\) (uniform for all \(i\)) such that
    \[
    \mathfrak{m}_\infty(\varphi_i)\le 1-\delta \quad\text{for every } i.
    \]

    We now prove by induction on \(n\) that for any distinct indices \(i_1,\dots,i_n\),
    \[
    \mathfrak{m}_\infty\Bigl(\varphi_{i_1}\vee \varphi_{i_2}\vee\cdots\vee \varphi_{i_n}\Bigr) \le 1 - n\delta.
    \]
    For \(n=1\) the statement is trivial since \(\mathfrak{m}_\infty(\varphi_{i_1})\le 1-\delta\) by assumption.
    
    Now, assume the claim holds for \(n=k\); that is,
    \[
    \mathfrak{m}_\infty\Bigl(\bigvee_{j=1}^k \varphi_{i_j}\Bigr) \le 1 - k\delta.
    \]
    We now consider \(n=k+1\). Apply the two–predicate result to the predicates
    \[
    A:=\bigvee_{j=1}^k \varphi_{i_j} \quad\text{and}\quad B:=\varphi_{i_{k+1}}.
    \]
    Since
    \[
    A\wedge B = \bigvee_{j=1}^{k} (\varphi_{i_j} \wedge \varphi_{i_{k+1}} ) \in I,
    \]
    we have
    \[
    \mathfrak{m}_\infty(A\wedge B)=1.
    \]
    Then by Lemma \ref{two-predicate}:
    
    \[
    \mathfrak{m}_\infty(A\vee B) = \mathfrak{m}_\infty(A) + \mathfrak{m}_\infty(B) - \mathfrak{m}_\infty(A\wedge B) = 1 - k\delta + 1 - \delta - 1 = 1 - (k+1)\delta.
    \]

    Thus, for $k$ big enough, we have that $\mathfrak{m}_\infty(\varphi_{i_1}\vee \varphi_{i_2}\vee\cdots\vee \varphi_{i_n})< 0$, which is a contradiction. 

\end{proof}

\begin{definition}\label{wide}
    A type $p$ is $I$-\emph{wide} if $ker(p)\cap I=\emptyset$.

\end{definition}

We can now relate all to $f$-generic types.

\begin{lemma}\label{positive implies small}
    If $I$ is an ideal and $\varphi(x,a)>0$ in $G$, then $\varphi\in I$.
\end{lemma}

\begin{proof}
    By saturation, $\varphi(x,a)\geq \lambda$ for some $\lambda>0$, thus, the predicate $\varphi$ is in $I$.
\end{proof}

\begin{theorem}
    If $I$ is a $S1$-ideal over $M$ and $p$ is $I$-wide, then it is $f$-generic.
\end{theorem}

\begin{proof}
    Clearly, if $p$ is $I$-wide, all its conjugates are, so it suffices to show that $p$ does not fork (divide) over $M$: otherwise, there exists a predicate $\varphi(x,a_0)\in ker(p)$ and an indiscernible sequence $(a_i)_{i<\omega}$ such that $\{\varphi(x,a_i)\}_{i<\omega}$ is $k$-inconsistent for some $k$. This implies that $\bigwedge_{i=0}^{k-1} |\varphi(x,a_i)|>0$, thus, by Lemma \ref{positive implies small}, it is small. Therefore, by the $S1$ property, the formula $|\varphi(x,a_0)|$ is in $I$, this is a contradiction.
\end{proof}

\begin{corollary}\label{final in section}
    If $G$ is definably amenable with an $M$-invariant mean and $I$ is the ideal of small formulas. Then any $I$-wide global type is $f$-generic.
\end{corollary}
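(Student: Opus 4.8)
The plan is to recognize that this corollary is simply the conjunction of the two results stated immediately before it, so its proof consists of chaining them. First I would invoke the theorem asserting that, when $G$ is definably amenable and the mean $\mathfrak m$ is $M$-invariant, the collection $I$ of small positive predicates is an $S_1$-ideal over $M$. This supplies exactly the hypothesis needed to feed into the second ingredient. Then I would apply the theorem stating that whenever $I$ is an $S_1$-ideal over $M$ and $p$ is $I$-wide, the type $p$ is $f$-generic. Since a global $I$-wide type satisfies precisely that hypothesis, we conclude at once that any such $p$ is $f$-generic.

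Because everything substantial is already packaged in the two preceding theorems, the corollary itself presents no real obstacle; it is a one-line appeal. The genuinely nontrivial content lies upstream, in establishing the $S_1$ property for the ideal of small formulas. The main obstacle there is the inductive estimate $\mathfrak m_{\infty}(P_1\cdots P_n)\le 1-2n\epsilon$: assuming toward a contradiction that some $P_i$ fails to be small, say $\mathfrak m_{\infty}(P_i)=1-\epsilon$, while all pairwise sums $P_i+P_j$ are small, one shows that the products taken along the indiscernible sequence push $\mathfrak m_{\infty}$ below $0$. Since $\overline{\lambda P_1\cdots P_n}\in[0,1]$ forces $\mathfrak m_{\infty}\in[0,1]$, the bound $1-2n\epsilon<0$ for large $n$ is absurd. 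This rests on the submultiplicativity-type inequality $\overline{PQ}\le 1+\overline{P}+\overline{Q}-\overline{P+Q}-\overline{\tfrac12(P+Q)}$ together with the linearity and positivity of $\mathfrak m$.

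Once that estimate is in hand, the forking-to-$f$-generic passage supplied by the second theorem is routine: a forking (equivalently, dividing over $M$) witness for an $I$-wide global type yields a formula $\varphi(x,a_0)\in p$ and an indiscernible sequence $(a_i)_{i<\omega}$ whose instances are $k$-inconsistent, so the finite sum $\sum_{i<k}|\varphi(x,a_i)|$ is strictly positive in $G$ and hence small by the lemma on strictly positive predicates; the $S_1$ property then forces each $|\varphi(x,a_i)|\in I$, contradicting $I$-wideness. Thus the only place where care is required is the combinatorial bookkeeping of the sixteen-term product in the $S_1$ argument, and the corollary itself is obtained by simply composing the two theorems.
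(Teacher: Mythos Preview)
Your proposal is correct and matches the paper's approach exactly: the corollary is stated without proof in the paper because it is the immediate composition of the two preceding theorems, and you have identified precisely this chaining. Your additional commentary on the upstream $S_1$ argument and the forking passage is accurate but goes beyond what the corollary itself requires.
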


\section{Randomizations}\label{Randomization}
In this section, we prove that the randomization or first-order definably amenable groups are extremely definably amenable. This generalizes the results from Berenstein and Mu\~noz \cite{berenstein2021definable} that was restricted to dependent groups. In particular, this gives us new examples of extremely definably amenable groups.

Let us recall briefly the construction of randomizations. For a more detailed exposition, we refer the reader to \cite{ben2009randomizations}:

Fix $(\Omega, B, \mu)$ a probability space and $L$ a first-order language.

We define a new language $L^R$ as $L^R=\{K,B,\top,\bot, \sqcap, \sqcup, \neg ,\llbracket \varphi_i(\bar x)\rrbracket\},$
where $K$ and $B$ are sorts, $\top,\bot, \sqcap, \sqcup, \neq$ are symbols for the boolean operations and $\llbracket \varphi_i(\bar x)\rrbracket$ are functions \[\llbracket \varphi_i(\bar x)\rrbracket: K^n\to B,\] where $n$ is the number of free variables of $\varphi_i$.

Given a first order $L$-structure $M$,  let $(\Omega, \mathcal B, \mu)$ be an atomless finitely additive probability space and $\mathcal K$ a set of functions \[\{f_i:\Omega\to M| i\in I\}.\]

Both $K$ and $B$ are pre-metric spaces with \[d_K(X,Y)=\mu(\{\omega \in \Omega| X(m)\neq Y(m)\})\] and \[d_B(A,B)=\mu(A\triangle B).\]

The pair $( \mathcal B,K)$, is called a \emph{randomization} of $M$ if, as a premetric structure in $L^R$, satisfies the following:

\begin{enumerate}
    \item For each $L$-formula $\varphi(\bar x)$ and  $f$ in $K^n$, we have
          \[\llbracket \varphi_i(X)\rrbracket= \{\omega \in \Omega: M\models \varphi(X(m))\}\]
    \item  For every $B \in \mathcal B$ and $\epsilon >0 $ there are $f, g \in \mathcal K$ such that \[\mu(\llbracket f = g\rrbracket \triangle B) < \epsilon\].
    \item For every $L$-formula  $\varphi(\bar x; y)$, $g$ in $\mathcal K^n$ and $\epsilon > 0$ there is $f \in \mathcal K$ such that \[\mu \llbracket \varphi(f, g)\rrbracket \triangle \llbracket \exists \varphi(\bar x,g)\rrbracket <\epsilon \]

          The interpretation of $\top,\bot, \sqcap, \sqcup, \neq$ are the usual boolean operations on $B$.

\end{enumerate}

The completion of $K$ and $B$ are continuous structures.

One associates an $L^R$ continuous theory $T^R$ to every first order $L$ theory $T$, see \cite{ben2009randomizations}. This theory $T^R$ is the common theory of all the randomizations of models of $T$.

A randomization \((K, \mathcal{B})\) of \(\mathcal{M}\) is \emph{full} if 
\begin{itemize}
    \item \(\mathcal{B}\) is equal to the set of all events \(\llbracket \psi(\bar{f}) \rrbracket\), where \(\psi(\bar{x})\) is a formula of \(L\) and \(\bar{f}\) is a tuple in \(K\).
    \item \(K\) is \emph{full in} \(\mathcal{M}^\Omega\), that is, for each formula \(\theta(x, y)\) of \(L\) and tuple \(\bar{g}\) in \(K\), there exists \(f \in K\) such that
    \[
    \llbracket \theta(f, \bar{g}) \rrbracket = \llbracket (\exists x \, \theta)(\bar{g}) \rrbracket.
    \]
\end{itemize}

 By a \emph{reduced pre-structure}, we will mean a pre-structure such that \(d_K\) and \(d_B\) are metrics. Then every pre-structure \((K, \mathcal{B})\) induces a unique reduced pre-structure \((\bar{K}, \bar{\mathcal{B}})\) by identifying elements which are at distance zero from each other. The induced continuous structure is then obtained by completing the metrics, and will be denoted by \((\hat{K}, \hat{\mathcal{B}})\). We say that a pre-structure \((K, \mathcal{B})\) is \emph{pre-complete} if the reduced pre-structure \((\bar{K}, \bar{\mathcal{B}})\) is already a continuous structure, that is, \((\hat{K}, \hat{\mathcal{B}}) = (\bar{K}, \bar{\mathcal{B}})\).

\begin{fact}[\cite{ben2009randomizations},]
    $T$ is complete/stable/dependent if and only if $T^R$ is.
\end{fact}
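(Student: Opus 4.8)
The plan is to use the structural description of definable predicates of $T^R$: every such predicate is a uniform limit of continuous combinations of the basic predicates $\mu\llbracket\varphi(\bar x)\rrbracket$, where $\varphi$ ranges over first-order $L$-formulas. Since completeness, stability and dependence are all preserved under continuous connectives and uniform limits, in each of the three cases it suffices to control these generating predicates. Completeness is then immediate: if $T$ is complete, then for every $L$-sentence $\sigma$ the value $\mu\llbracket\sigma\rrbracket$ is $1$ or $0$ uniformly across all randomizations according to whether $T\vdash\sigma$, so the common theory $T^R$ pins down the value of every sentence and is complete; conversely, a sentence $\sigma$ with $M_1\models\sigma$ and $M_2\models\neg\sigma$ gives randomizations in which $\mu\llbracket\sigma\rrbracket$ equals $1$ and $0$ respectively, so $T^R$ is incomplete.

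\emph{Stability.} For the interesting directions I would use the double-limit criterion of the stability definition. In a randomization, elements of sort $K$ are classes of measurable maps $a\colon\Omega\to M$, and the basic predicate evaluates as $\mu\llbracket\varphi(a,b)\rrbracket=\int_\Omega \mathbf 1[\,M\models\varphi(a(\omega),b(\omega))\,]\,d\mu(\omega)$. Given sequences $(a_i)$ and $(b_j)$ from $K$, I would apply the double-limit characterization of stability of $\varphi$ in $T$ pointwise to the ordinary sequences $(a_i(\omega))_i$ and $(b_j(\omega))_j$: for a stable $\varphi$ the iterated truth values $\lim_i\lim_j$ and $\lim_j\lim_i$ of $\mathbf 1[\varphi]$ exist and agree for \emph{every} $\omega$. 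A dominated-convergence argument, the integrands being bounded by $1$, then lets me pull both iterated limits through the integral, giving $\lim_i\lim_j \mu\llbracket\varphi(a_i,b_j)\rrbracket=\lim_j\lim_i \mu\llbracket\varphi(a_i,b_j)\rrbracket$; by the reduction above $T^R$ is stable. For the converse I would lift an order sequence: if $\varphi$ is unstable in $T$, witnessed by $(a_i),(b_j)$ in a model, then their images as constant (Dirac) random variables give $\mu\llbracket\varphi\rrbracket\in\{0,1\}$ reproducing the failure of the double limit, so $T^R$ is unstable.

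\emph{Dependence.} The same two-directional scheme applies. If $T$ has a formula with IP, I would push the witnessing sequences $(a_i)_{i<\omega}$ and $(b_I)_{I\subset\omega}$ into the randomization as constant random variables, so that $\mu\llbracket\varphi\rrbracket$ takes exactly the values $0$ and $1$ prescribed by the pattern and hence has IP in $T^R$. For the forward direction I would invoke the combinatorial characterization of NIP: $\varphi(x,y)$ is dependent in $T$ iff the set system $\{\varphi(x,b):b\}$ has finite VC dimension. The continuous predicate $\mu\llbracket\varphi(x,y)\rrbracket$ is an average over $\Omega$ of these $\{0,1\}$-valued set systems, a ``random VC class'' of the same finite VC dimension; by Ben-Yaacov's continuous Vapnik--Chervonenkis theory, finite VC dimension forces the averaged predicate to be dependent, and the reduction to generating predicates closes the argument.

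\emph{Main obstacle.} I expect the crux to be the two transfer steps in the forward directions. For stability this is making the limit--integral exchange rigorous, namely verifying measurability of the pointwise limit functions and checking that stability genuinely yields pointwise existence of the iterated limits for \emph{arbitrary} sequences $(a_i(\omega))_i$, not merely indiscernible ones. For dependence the delicate point is controlling the VC dimension of the averaged set system and importing the continuous VC machinery correctly. By comparison, the lifting directions and the completeness equivalence are routine.
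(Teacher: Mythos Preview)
The paper does not prove this statement at all: it is recorded as a \emph{Fact} with a citation to \cite{ben2009randomizations} and used as a black box, with no argument given. There is therefore nothing in the present paper to compare your proposal against.

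On the substance of your sketch: the completeness equivalence and the ``lifting'' directions (embedding $M$ into its randomization via constant random variables) are correct and standard. The forward stability argument, however, has exactly the gap you flag as the main obstacle, and it is a genuine one: stability of $\varphi$ in $T$ does \emph{not} guarantee that for arbitrary sequences $(a_i),(b_j)$ the iterated pointwise limits of $\mathbf{1}[\varphi(a_i(\omega),b_j(\omega))]$ exist --- it only says that when both iterated limits exist they coincide. So the dominated-convergence step cannot be executed as written. Ben~Yaacov's proof in the cited paper does not proceed by a pointwise limit exchange; it goes instead through definability of types / type-counting in the randomization. Your dependence sketch via continuous VC theory is in spirit the approach of \cite{ben2009randomizations}, though the transfer of VC-dimension bounds to the averaged predicate $\mu\llbracket\varphi\rrbracket$ requires nontrivial work there.
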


\begin{definition}
    Let $(\Omega,\mathcal B, \mu)$ be an atomless finitely additive probability space and let $M$ be a $L$-structure.

    A $\mathcal B$-simple random element of $M$ is a $\mathcal B$-measurable function in $M$ with finite range.

    A $\mathcal B$-countable random element of $M$ is a $\mathcal B$-measurable function in $M^{\Omega}$ with countable range.
\end{definition}

It is proven in \cite{keisler1999randomizing} that the set $K_S$ of $B$-simple random elements of $M$ is a full-randomization of $M$ and that $K_C$, the set of countable random elements is a full-randomization of $M$ that is pre complete. Moreover  $K_S$ is dense in $K_C$ (\cite{ben2009randomizations}).

Now we prove the main theorem of this section:

\begin{theorem}\label{randomizations are extremely amenable}
    Let $G$ be a definably amenable group with measure $\nu$. Then $G^R$ is extremely definably amenable.
\end{theorem}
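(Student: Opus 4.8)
The plan is to produce a $G^R$-invariant global type of $G^R$, since by the remark following the definition of extreme definable amenability this is exactly what is required (a mean whose kernel is a complete type, equivalently a $G^R$-invariant global type). The guiding principle is that the left-invariant Keisler measure $\nu$ witnessing the first-order definable amenability of $G$ becomes, after randomization, the \emph{law} of a single random element of $G$; that random element then determines a genuine point of the type space of $G^R$, and left-invariance of $\nu$ translates directly into invariance of its type.

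First I fix a global left-invariant Keisler measure $\nu$ on the $L$-definable subsets of $G$, living in a monster $\bar M\models T$ (this is what first-order definable amenability supplies). Passing to a monster $\bar M^R\models T^R$, recall that $G^R$ is the definable group of random elements of $G$ under pointwise multiplication $(f\cdot g)(\omega)=f(\omega)g(\omega)$, and that its basic $x$-predicates are generated by the expressions $\mu\llbracket\varphi(x,\bar b)\rrbracket$ for $L$-formulas $\varphi$ and parameters $\bar b$ from $\bar M^R$. In an elementary extension $\bar N^R\succ\bar M^R$ I construct a random element $\xi$ of $G$ (so automatically $\xi\in G^R$) whose \emph{law} is $\nu$, meaning $\mu\llbracket\varphi(\xi)\rrbracket=\nu(\varphi)$ for every $L$-formula $\varphi$ over $\bar M$, and which is moreover independent from $\bar M^R$; concretely one enlarges the underlying probability space $(\Omega,\mathcal B,\mu)$ by an independent factor carrying fresh randomness and lets $\xi$ depend only on that factor. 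I then set $p=\tp(\xi/\bar M^R)$, a complete global type of $G^R$.

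It remains to verify that $p$ is $G^R$-invariant, i.e.\ that $\hat p(h\cdot P)=\hat p(P)$ for every $h\in G^R(\bar M^R)$ and every $x$-predicate $P$. Because the $x$-predicate algebra is topologically generated by the $\mu\llbracket\varphi(x,\bar b)\rrbracket$, and the operation $P\mapsto h\cdot P$ is continuous and commutes with the continuous connectives and with the quantifiers over the bound variables, it suffices to check invariance on these generators. Writing $h\cdot\mu\llbracket\varphi(x)\rrbracket=\mu\llbracket\varphi(h^{-1}x)\rrbracket$ and evaluating at $\xi$, one computes
\[
\hat p\big(h\cdot\mu\llbracket\varphi(x)\rrbracket\big)=\mu\big(\{\omega:\varphi(h(\omega)^{-1}\xi(\omega))\}\big)=\int_{\Omega}\nu\big(h(\omega)\cdot\varphi\big)\,d\mu(\omega)=\int_{\Omega}\nu(\varphi)\,d\mu=\nu(\varphi),
\]
where $h(\omega)\cdot\varphi$ denotes the left translate by $h(\omega)$ of the set defined by $\varphi$, the second equality is Fubini applied to the independent copy $\xi$, and the third is the left-invariance of $\nu$. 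Thus $\hat p(h\cdot\mu\llbracket\varphi\rrbracket)=\hat p(\mu\llbracket\varphi\rrbracket)=\nu(\varphi)$ for all $h$ and all $\varphi$, so $p$ is $G^R$-invariant and $G^R$ is extremely definably amenable.

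The hard part will be the model-theoretic construction of the independent random element $\xi$ with prescribed law $\nu$ inside an elementary extension of the randomization: one must show that the partial type asserting that $x$ is a random element of $G$, that its law equals $\nu$, and that it is independent from $\bar M^R$, is consistent. This uses the atomlessness of the probability algebra of $T^R$, saturation, and the availability of independent realizations in randomizations. A secondary point requiring care is the reduction of invariance to the generating predicates $\mu\llbracket\varphi\rrbracket$, namely the quantifier-elimination statement for $T^R$ guaranteeing that a complete type of $G^R$ is determined by its values on these expressions, so that matching $h\cdot p$ and $p$ on them forces $h\cdot p=p$.
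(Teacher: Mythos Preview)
Your approach is essentially the paper's, repackaged semantically: the type you call $\tp(\xi/\bar M^R)$ is exactly the type the paper writes down by the formula $\hat p\big(\mu\llbracket\varphi(x,\bar a)\rrbracket\big)=\int_\Omega \nu(\varphi(x,\bar a(\omega)))\,d\mu(\omega)$ (which for simple $\bar a$ is their displayed sum), and your product-space/independence picture is precisely what makes the Fubini step legitimate. The paper organizes the argument syntactically---define $p$ on the generators, prove finite consistency by hand, then compute invariance directly---whereas you first build the realization $\xi$ and read everything off its type; but the content is the same, and both rely on quantifier elimination for $T^R$ and density of simple random elements to reduce to the generators $\mu\llbracket\varphi\rrbracket$.

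The one place where you defer real work is the construction of $\xi$: you say ``enlarge $\Omega$ by an independent factor and let $\xi$ depend only on that factor,'' but you still have to produce a measurable $\xi:\Omega'\to G(\bar M)$ whose pushforward on \emph{every} definable set equals the finitely additive Keisler measure $\nu$. That is not automatic, and the paper's explicit finite-consistency argument (partition $\Omega$ according to the ranges of the parameters, then partition each cell according to the $\nu$-measures of the atoms of the relevant Boolean algebra of $L$-formulas, and pick a witness in each piece) is exactly what is needed to fill your sketch---either directly, or via compactness after doing it for each finite fragment. Once that is in place, your Fubini computation for invariance is correct and arguably cleaner than the paper's coordinate-by-coordinate calculation.
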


\begin{proof}
    Since EDA is a property of the theory of $G^R$, we may assume that $M^R=K_C$ where $K_C$ is induced by the countable random elements. Moreover, without loss of generality, we may assume that $\Omega = [0,1]$.
    Let $p$ be the following type:
    \[p=\{\mu \llbracket \varphi(\bar x,\bar a) \rrbracket = \sum_{\bar m_i\in ran(\bar a)} \nu(\varphi(\bar x,\bar m_i))\mu(\bar a ^{-1}(\bar m_i)) | \Bar{a} \in K_S\}\]

    By quantifier elimination and density of $K_S$, this determines a unique complete type.

    Notice that the definition of $p$ is quite natural: if $\bar a$ has constant value $\bar m$, then $p(\llbracket \varphi(\bar x,\bar a) \rrbracket) = \nu(\varphi(\bar x,\bar m))$. The extension to non-constant variables $\bar a$ is basically the integral over that function (which is equal to a finite sum since $\bar a$ is simple.

    Let us check that $p$ is consistent: take $p_0$ a finite subset of $p$,

    \[p_0=\{\mu \llbracket \varphi_j(\bar x,\bar a_j) \rrbracket = \sum_{m_{i,j}\in ran(a_j)} \nu(\varphi_j(\bar x,\bar m_{i,j})\mu(\bar a_j ^{-1}(\bar m_{i,j})): j\leq n\}\]

    Let us define $A_{i,j}= \bar a_j ^{-1}(\bar m_{i,j})$. Notice that $\{A_{i,j}\}_{\bar m_{i,j}\in ran(a_j) }$ is a partition of $\Omega$. Hence, the non-empty intersections of the form $\bigcap_{j\leq n} A_{i_j} $ form a partition of $\Omega$.  We build $f$, a realization of $p_0$, as follows: For every \[I= \{(i_1,1),...,(i_n,n)\}. \] We name any of such intersections as  \[A_I= A_{i_1,1}\cap...\cap A_{i_n,n}. \]

    Take $B_1,...,B_{l}$ the atoms of the Boolean algebra generated by $\varphi_j(\bar x,\bar m_{i_j,j})$ and choose any partition of $A_I$ of the form $\{A_{I,r}\}_{r\leq l}$ such that $\mu (A_{I,r})= \nu(B_r)\mu(A_I)$, (it exists since $\sum \nu(B_r)=1$).

    Now, for each $A_{I,r}$, we take a point $b_{I,r}$ satisfying $B_r$ and define $f:\Omega:\to G$ such that $f[A_{I,r}]= b_{I,r}$. Since $\{A_{I,r}\}$ is a measurable partition of $\Omega$, then $f\in K_S$.

    Let us check that $f\models p_0$:

    \begin{align*}
        \mu \llbracket \varphi_j(f,\bar a_j) \rrbracket & = \mu \{\omega: \models \varphi_j(f(\omega),\bar a_j(\omega))\}                                                              \\
                                                        & = \sum_{\bar m_{i,j}\in ran(\bar a_j)} \mu \{\omega\in A_{i,j}: \models \varphi_j(f(\omega),m_{i_j,j})\}                            \\
                                                        & = \sum_{\bar m_{i,j}\in ran(\bar a_j)} \sum_{I\ni (i,j)} \mu \{\omega\in A_{I}: \models \varphi_j(f(\omega),\bar m_{i_j,j})\}            \\
                                                        & = \sum_{\bar m_{i,j}\in ran(\bar a_j)} \sum_{I\ni (i,j)} \sum_{r} \mu \{\omega\in A_{I,r}: \models \varphi_j(f(\omega),\bar m_{i_j,j})\} \\
                                                        & = \sum_{\bar m_{i,j}\in ran(\bar a_j)} \sum_{I\ni (i,j)} \sum_{r} \mu \{\omega\in A_{I,r}: \models \varphi_j(b_{I,r},\bar m_{i_j,j})\}   \\
                                                        & = \sum_{\bar m_{i,j}\in ran(\bar a_j)} \sum_{I\ni (i,j)} \sum_{B_r\subset \varphi_j(\bar x,m_{i_j,j}) } \mu  (A_{I,r})                   \\
                                                        & = \sum_{\bar m_{i,j}\in ran(\bar a_j)} \sum_{I\ni (i,j)} \mu(A_I) \sum_{B_r\subset \varphi_j(\bar x,m_{i_j,j}) } \nu(B_r)                \\
                                                        & = \sum_{\bar m_{i,j}\in ran(\bar a_j)} \mu(A_{i,j}) \nu(\varphi_j(\bar x,m_{i_j,j}))                                                     \\
                                                        & = \sum_{\bar m_{i,j}\in ran(\bar a_j)} \nu(\varphi_j(\bar x,\bar m_{i_j,j})\mu(\bar a_j ^{-1}(\bar m_{i_j,j}))
    \end{align*}

    Now, we check that $p$ is $G^R$-invariant. Again, since $K_S$ is dense, it is enough to prove that $p$ is invariant under the simple random elements in $G^R$:

    Let $g_R$ a simple random element in $G^R$ and let $h_R=g_R^{-1}$, then, in $p$, we have that
    \begin{align*}
        \mu \llbracket g_R\varphi(\bar x,\bar a) \rrbracket & = \mu \llbracket \varphi(h_R \bar x,\bar a) \rrbracket                                                                          \\
                                                       & = \sum_{h_i \in ran(h_R), m_j\in ran(\bar a)} \nu(\varphi(h_i \bar x,m_{j})(\mu(h_R ^{-1}(h_i))+\mu(\bar a ^{-1} (m_{j}))       \\
                                                       & = \sum_{ m_j\in ran(\bar a)} \nu(\varphi(h_i \bar x,m_{j}))\sum_{h_i\in ran(h_R)}(\mu(h_R ^{-1}(h_i))+\mu(\bar a ^{-1} (m_{j})) \\
    \end{align*}

    Finally, by amenability of $G$ we have that \[\nu(\varphi(h_i \bar x,\bar m_{j}))=\nu(\varphi(\bar x,\bar m_{j}))\]

    and, since $\mu(\Omega)=1$

    \[\sum_{h_i\in ran(h_R)}(\mu(h_R ^{-1}(h_i)))=1\]

    So we get that:

    \[ \mu \llbracket g_R\varphi(\bar x,\bar a) \rrbracket = \sum_{\bar m_i\in ran(\bar a)} \nu(\varphi(\bar x,\bar m_{i})\mu(\bar a ^{-1}(\bar m_{i}))\]

    Which is what we wanted to prove.
\end{proof}

\begin{corollary}
    The randomization of pseudofinite groups are extremely definably amenable.
\end{corollary}

\appendix

\section{Continuous Logic}

In this section we summarize the notation we will use throughout the paper and recall some key theorems from continuous logic. For a more detailed exposition the reader may check \cite{yaacov2008model}.

Note: in this paper we do not assume that truth values are restricted to the interval $[0,1]$; instead, every formula may have its own bounded range within the reals. This distinction is inconsequential since we can always normalize the formulas and their semantic remains intact, but it allows us to treat the set of formulas and predicates as a vector space, which facilitates the translation between logic and functional analysis (see \cite{farah2016model}).

A language $L$ is a triplet $(\mathcal S, F, R)$ which contains the following data:

\begin{enumerate}
    \item $\mathcal S$ is the set of sorts; for each sort $S \in \mathcal S$, there is a symbol $d_S$ meant to be interpreted as a metric bounded by a positive number $M_S$.

    \item $\mathcal F$ is the set of function symbols; for each function symbol $f\in  F$, we formally specify $\dom(f)$ as a sequence $(S_1, . . . , S_n)$ from $\mathcal S$ and $rng(f) \subset S$ for some
          $S\in  \mathcal S$. We will want $f$ to be interpreted as a uniformly continuous function.
          To this effect we additionally specify, as part of the language, functions $\delta^f_i: \mathbb R^+ \to \mathbb R^+$, for $i \leq n$. These functions are called uniform continuity moduli.

          (When interpreted, the function $f$ satisfies that \[d_{S_i}(x_i,y_i)<\delta^f_i(\epsilon) \mbox { for $i\leq n$} \implies d_S(f (\bar x),f(\bar y))<\epsilon.)\]

    \item $\mathcal R$ is the set of relation symbols; for each relation symbol $R \in \mathcal R$ we formally specify $\dom(R)$ as a sequence $(S_1, . . . , S_n)$ from $S$ and $rng(R) \subset K_R$ for some bounded interval $K_R$ in $\mathbb R$. As with function symbols, we additionally specify,
          as part of the language, functions $\delta^R_i:
              \mathbb R^+ \to \mathbb R^+$, for $i \leq n$, called uniform
          continuity moduli.
\end{enumerate}
For each sort $S \in \mathcal S$, we have infinitely many variables $x^S_i$.

Terms are defined inductively in the same way as in the first order logic:
\begin{enumerate}
    \item A variable $x^S_i$ is a term with domain and range $S$
    \item If $f \in \mathcal F$, \[dom(f) = (S_1, . . . , S_n)\] and
          \[\tau_1, . . . , \tau_n\] are terms with $rng(\tau_i) \subset S_i$ then
          $f(\tau_1, . . . , \tau_n$) is a term with range the same as $f$ and domain
          determined by the $\tau_i$'s.
\end{enumerate}

Formulas are also defined inductively, but the definition of connectives and quantifiers require more attention:

\begin{enumerate}
    \item If $R$ is a relation symbol (possibly a metric symbol) with domain $(S_1, . . . , S_n)$ and $\tau_1, . . . , \tau_n$ are terms with ranges $S_1, . . . , S_n$
          respectively then $R(\tau_1, . . . , \tau_n)$ is a formula. Both the domain and uniform
          continuity moduli of $R(\tau_1, . . . , \tau_n)$ can be determined naturally from $R$ and
          $\tau_1, . . . , \tau_n$. These are the atomic formulas.
    \item (Connectives) If $f : \mathbb R^n \to \mathbb R$ is a uniformly continuous function and $\varphi _1, . . . , \varphi_n$ are formulas then $f(\varphi_1, . . . , \varphi_n)$ is a formula. The domain and uniform continuity moduli are determined from $f$ and $\varphi_1, . . . , \varphi_n$.
    \item (Quantifiers) If $\varphi$ is a formula and $x$ is a variable of sort $S$, then both $\inf _{x\in S} \varphi$ and $\sup_{x\in S} \varphi$ are formulas.
\end{enumerate}

Given a language $L=(\mathcal S, \mathcal F, \mathcal R)$, a metric structure will be a collection of metric spaces, one for each sort $S$, with a metric $d^M_S$ of diameter at most $M_S$, together with a collection of functions $F^M_i$ and relations $R^M_i$ interpreted in the natural way.

The set of formulas with free variables in $\Bar{x}$ in the language $\Lang$ is denoted by $\F_{\Lang}^{\Bar{x}}$. The set of sentences $\F^{()}_{\Lang}$ is denoted by $Sent_{\Lang}$.

When interpreted in a metric structure $M$, formulas become bounded uniformly continuous functions $\varphi^M(\bar x): M^n\to \mathbb R$. If the formula $\varphi$ is a sentence, then $\varphi^M$ is just a real number.

A \emph{theory} $T$ is a set of sentences. The \emph{theory} of a metric structure $M$ is the set of sentences $\varphi$ such that $\varphi^M=0$.  We say that $M\models T$ if $T\subset \Th(M)$.

Given a theory $T$, we may endow $\F_{\Lang}^{\Bar{x}}$ with the seminorm \[|\varphi(\Bar{x})|:= \sup\{|\varphi(\Bar{a})|: \Bar{a}\in M^n, M\models T\}.\]

The quotient of $\F_{\Lang}^{\Bar{x}}$ by the formulas of seminorm 0 is a normed vector space whose completion is denoted as $\mathcal{P}^{\Bar{x}}_T$, the set of \emph{definable predicates} (without parameters). In other words, a function $\Phi(\Bar{x})$ is a definable predicate if there is a sequence of formulas $\varphi(\Bar{x})$ that converges uniformly to $\Phi(\Bar{x})$ in every model $M\models T$.

We will denote as $\mathcal{P}_{T(A)}^{\Bar{x}}$ the set of predicates with parameters in the set $A$. If the theory $T$ is clear from the context, we simply write $\mathcal{P}_A^{\Bar{x}}$.

\begin{definition}
    A closed set $D$ of $M^n$ is a {\it definable set} if the relation $dist(\Bar{x},D)$ is a definable predicate.
    A function $M^n\to M^k$ is a {\it definable function} if the relation $d(f(\Bar{x}),\bar y)$ is a definable predicate.
\end{definition}

The main facts about definable sets and definable functions we will use are the following (see \cite{yaacov2008model}):

\begin{fact}\label{definable sets and functions}
\begin{itemize}
    \item Quantifying a definable predicate over a definable set gives a definable predicate: if $D$ is a definable set and $\varphi(x,y)$ is a definable predicate, then $\sup_{x\in D}\varphi(x,y)$ and $\inf_{x\in D}\varphi(x,y)$ are definable predicates.
    \item The substitution of a definable function in a definable predicate gives a definable predicate.
    \item Definable sets are stable under ultraproducts: if $D$ is a definable set, then there exists a definable predicate $\varphi(x)$ such that $D$ is the zero-set of $\varphi(x)$ and for every ultraproduct $M_{\mathcal U}=\prod_{\mathcal{U}} M_i $ of models of $Th(M)$, we have that $\varphi(M_{\mathcal{U}})=\prod_{\mathcal{U}} \varphi(M_i)$.
\end{itemize}
\end{fact}

\section{Functional Analysis}

In this section, we recall some basic definitions and results from functional analysis that we will use in the paper.

A \emph{Radon measure} on a topological space \(X\) is a measure \(\mu\) defined on the Borel \(\sigma\)-algebra of \(X\) that satisfies the following properties:
\begin{enumerate}
    \item \textbf{Locally finite:} For every point \(x \in X\), there exists a neighborhood \(U\) of \(x\) such that \(\mu(U) < \infty\).
    \item \textbf{Inner regularity:} For every Borel set \(A \subseteq X\), \(\mu(A)\) is the supremum of \(\mu(K)\) over all compact subsets \(K \subseteq A\).
    \item \textbf{Outer regularity:} For every Borel set \(A \subseteq X\), \(\mu(A)\) is the infimum of \(\mu(U)\) over all open sets \(U \supseteq A\).
\end{enumerate}

\begin{theorem}[Riesz–Markov–Kakutani Representation Theorem]\label{Riesz}
    Let \( X \) be a compact Hausdorff space. The dual space of \( C(X) \), the space of continuous real-valued functions on \( X \) with the sup norm, is isometrically isomorphic to the space of Radon measures on \( X \) with the total variation norm. 
    
    That is, for every bounded linear functional \( L: C(X) \to \mathbb{R} \), there exists a unique Radon measure \( \mu \) on the Borel \(\sigma\)-algebra of \( X \) such that
    \[
    L(f) = \int_X f \, d\mu, \quad \forall f \in C(X).
    \]

    \end{theorem}
    A point \( p \) in a convex set \( S \) is called an \emph{extreme point} if \( p \) cannot be expressed as a non-trivial convex combination of other points in \( S \). Formally, \( p \in S \) is an extreme point if, whenever \( p = t x + (1 - t) y \) for some \( x, y \in S \) and \( t \in (0,1) \), it follows that \( x = p \) and \( y = p \).
    \begin{theorem}[Krein-Milman]\label{kreinmilman}
        Let \( X \) be a compact convex subset of a locally convex topological vector space. Then \( X \) is the closed convex hull of its extreme points, i.e.,
        \[
            X = \overline{\operatorname{conv}}(\operatorname{ext}(X)),
        \]
        where \( \operatorname{ext}(X) \) denotes the set of extreme points of \( X \), and \( \overline{\operatorname{conv}} \) denotes the closure of the convex hull.
    \end{theorem}
    
    \begin{theorem}[Phelps \cite{phelps1963extreme}]\label{phelps}
        Let $A$ and $B$ be algebras of bounded real valued functions on the sets $X$ and $Y$ respectively, each containing the constant functions. Let $K(A,B)$ be the convex set of all linear operators $T$ from $A$ to $B$ which satisfy $T\geq 0$ and $T(1)=1$. Then $T$ is an extreme point of $K(A,B)$ if and only if $T$ is multiplicative.
    \end{theorem}

\section{C*-algebras}\label{cstar}

In this section we recall some basic definitions and results from the theory of C*-algebras that we will use in the paper. For a more detailed exposition the reader may check \cite{farah2016model}.

A \emph{C*-algebra} is a complex algebra \( A \) equipped with a norm \( \| \cdot \| \) and an involution \( *: A \to A \) such that:
\begin{enumerate}
    \item \( A \) is a Banach space with respect to the norm \( \| \cdot \| \).
    \item The involution satisfies \( \| a^* a \| = \| a \|^2 \) for all \( a \in A \).
    \item The C*-identity holds: \( \| ab \| \leq \| a \| \cdot \| b \| \) for all \( a, b \in A \).
\end{enumerate}

By the Gelfand-Naimark theorem, every C*-algebra is isometrically isomorphic to a norm-closed self-adjoint subalgebra of \( B(H) \), the algebra of bounded linear operators on a Hilbert space \( H \).

The unitary group of a C*-algebra \( A \), denoted \( U(A) \), is the set of all invertible elements in \( A \) with respect to the multiplication operation.

There are several topologies on \( U(A) \). We are only interested in the following two:
\begin{enumerate}
    \item The \emph{norm topology}, which is the topology induced by the norm of \( A \).
    \item The \emph{relative Banach-space weak topology}, which is the topology induced by the weak topology on the dual of $A$ as a Banach space. 
\end{enumerate}

The (relative) weak topology is coarser than the norm topology, and \( U(A) \) is a topological group with respect to both topologies.

A map $\varphi: A \to B$ is completely positive if
$\varphi \otimes \text{id}_n : A \otimes M_n(\mathbb{C}) \to B \otimes M_n(\mathbb{C})$
is positive for all $n$. The acronym c.p.c. stands for completely positive and contractive,
where contractive means 1-Lipschitz.

A C*-algebra \( A \) is said to be \emph{nuclear} if for every tuple \(\bar{a}\) in the unit ball \(A_1\) and every \(\epsilon > 0\), there are a finite-dimensional C*-algebra \(F\) and completely positive contractive (c.p.c.) maps \(\varphi: A \to F\) and \(\psi: F \to A\) such that the diagram
\[
\begin{tikzcd}
A \arrow[r, "\varphi"] \arrow[rr, "id", bend left] & F \arrow[r, "\psi"] & A
\end{tikzcd}
\]
\(\epsilon\)-commutes on \(\bar{a}\).

\begin{theorem}[Paterson \cite{paterson1992nuclear}]\label{paterson}
    Let \( A \) be a C*-algebra. The unitary group \( U(A) \) is amenable with respect to the relative Banach-space weak topology if and only if \( A \) is nuclear.
\end{theorem}

\begin{theorem}[\cite{ge2001ultraproducts} Proposition 2.1]\label{Ge}
    The ultraproduct of unital $C^*$-algebras is a unital $C^*$-algebra and the unitary group of the ultraproduct is the ultraproduct of the unitary groups.
\end{theorem}

\bibliographystyle{alpha}
\bibliography{bibliog}

\newcommand{\etalchar}[1]{$^{#1}$}
\begin{thebibliography}{BYBHU08}

\bibitem[BM21]{berenstein2021definable}
Alexander Berenstein and Jorge~Daniel Mu{\~n}oz.
\newblock Definable connectedness of randomizations of groups.
\newblock {\em Archive for Mathematical Logic}, pages 1--23, 2021.

\bibitem[BY09]{BenYaacov2009}
Ita{\"i} Ben~Yaacov.
\newblock Continuous and random vapnik-chervonenkis classes.
\newblock {\em Israel Journal of Mathematics}, 173:309--333, 2009.

\bibitem[BY10]{yaacov2010stability}
Ita{\"\i} Ben~Yaacov.
\newblock Stability and stable groups in continuous logic.
\newblock {\em The Journal of Symbolic Logic}, 75(3):1111--1136, 2010.

\bibitem[BYBHU08]{yaacov2008model}
I~Ben~Yaacov, Alexander Berenstein, C~Ward Henson, and Alexander Usvyatsov.
\newblock Model theory for metric structures.
\newblock {\em London Mathematical Society Lecture Note Series}, 350:315, 2008.

\bibitem[BYJK09]{ben2009randomizations}
Ita{\"\i} Ben~Yaacov and H~Jerome~Keisler.
\newblock Randomizations of models as metric structures.
\newblock {\em Confluentes Mathematici}, 1(02):197--223, 2009.

\bibitem[FHL{\etalchar{+}}16]{farah2016model}
Ilijas Farah, Bradd Hart, Martino Lupini, Leonel Robert, Aaron Tikuisis, Alessandro Vignati, and Wilhelm Winter.
\newblock Model theory of $\mathrm {C^*}$-algebras.
\newblock {\em arXiv preprint arXiv:1602.08072}, 2016.

\bibitem[GH01]{ge2001ultraproducts}
Liming Ge and Don Hadwin.
\newblock Ultraproducts of c*-algebras.
\newblock In {\em Recent Advances in Operator Theory and Related Topics: The B{\'e}la Sz{\"o}kefalvi-Nagy Memorial Volume}, pages 305--326. Springer, 2001.

\bibitem[GPP14]{gismatullin2014compactifications}
Jakub Gismatullin, Davide Penazzi, and Anand Pillay.
\newblock On compactifications and the topological dynamics of definable groups.
\newblock {\em Annals of Pure and Applied Logic}, 165(2):552--562, 2014.

\bibitem[Gra65]{granirer1965extremely}
E~Granirer.
\newblock Extremely amenable semigroups.
\newblock {\em Mathematica Scandinavica}, 17(2):177--197, 1965.

\bibitem[HKP19]{hrushovski2019amenability}
Ehud Hrushovski, Krzysztof Krupinski, and Anand Pillay.
\newblock Amenability and definability.
\newblock {\em arXiv preprint arXiv:1901.02859}, 2019.

\bibitem[HPP08]{hrushovski2008groups}
Ehud Hrushovski, Ya’acov Peterzil, and Anand Pillay.
\newblock Groups, measures, and the nip.
\newblock {\em Journal of the American Mathematical Society}, 21(2):563--596, 2008.

\bibitem[Iva21]{ivanov2021metric}
Aleksander Ivanov.
\newblock Metric groups, unitary representations and continuous logic.
\newblock {\em Communications in Mathematics}, 29, 2021.

\bibitem[Kei99]{keisler1999randomizing}
H~Jerome Keisler.
\newblock Randomizing a model.
\newblock {\em Advances in Mathematics}, 143(1):124--158, 1999.

\bibitem[Pat92]{paterson1992nuclear}
Alan~LT Paterson.
\newblock Nuclear c*-algebras have amenable unitary groups.
\newblock {\em Proceedings of the American Mathematical Society}, 114(3):719--721, 1992.

\bibitem[Phe63]{phelps1963extreme}
Robert~R Phelps.
\newblock Extreme positive operators and homomorphisms.
\newblock {\em Transactions of the American Mathematical Society}, 108(2):265--274, 1963.

\bibitem[Rud91]{rudin1991functional}
Walter Rudin.
\newblock {\em Functional Analysis}.
\newblock McGraw-Hill, New York, 2nd edition, 1991.

\bibitem[Sim15]{simon2015guide}
Pierre Simon.
\newblock {\em A guide to NIP theories}.
\newblock Cambridge University Press, 2015.

\bibitem[Wag00]{wagner2000simple}
Frank~Olaf Wagner.
\newblock {\em Simple theories}, volume 260.
\newblock Springer, 2000.

\end{thebibliography}

\end{document}